\newcommand{\YY}{\mathbf Y}
\newcommand{\CC}{\mathbf C}
\newcommand{\NN}{\mathbf N}
\newcommand{\U}{\mathcal U}
\newcommand{\D}{\mathcal D}
\newcommand{\B}{\mathcal B}
\newcommand{\C}{\mathcal C}
\newcommand{\s}[1]{\mathsf{#1}}
\title{Up- and Down-Operators on Young's Lattice}
\author{Ricky Ini Liu}
\address{Department of Mathematics, North Carolina State University, Raleigh, NC}
\email{riliu@ncsu.edu}
\author{Christian Smith}
\address{Department of Mathematics, North Carolina State University, Raleigh, NC}
\email{casmit34@ncsu.edu}
\date{\today}
\thanks{\emph{Keywords}: Schur operator, plactic monoid, Young's lattice, up- and down-operators.}
\thanks{R. I. Liu and C. Smith were partially supported by National Science Foundation grant DMS-1700302. R.~I.~Liu was also partially supported by National Science Foundation grant CCF-1900460.}
\newtheorem{theorem}{Theorem}[section]
\newtheorem*{theorem*}{Theorem}
\newtheorem{corollary}[theorem]{Corollary}
\newtheorem{proposition}[theorem]{Proposition}
\newtheorem{lemma}[theorem]{Lemma}
\theoremstyle{definition}
\newtheorem{definition}[theorem]{Definition}
\newtheorem{example}[theorem]{Example}
\newtheorem{case}{Case}
\numberwithin{subcase}{case}
\begin{document}

\begin{abstract}
The up-operators $u_i$ and down-operators $d_i$ (introduced as Schur operators by Fomin) act on partitions by adding/removing a box to/from the $i$th column if possible. It is well known that the $u_i$ alone satisfy the relations of the (local) plactic monoid, and the present authors recently showed that relations of degree at most 4 suffice to describe all relations between the up-operators. Here we characterize the algebra generated by the up- and down-operators together, showing that it can be presented using only quadratic relations.
\end{abstract}

\maketitle

\section{Introduction}
The \emph{up-operators} $u_i$ for $i \in \NN$ act on a partition $\lambda$ by adding a box to the $i$th column of $\lambda$ if the result is a partition and by sending $\lambda$ to $0$ otherwise.  Similarly, the \emph{down-operators} $d_i$ act on $\lambda$ by subtracting a box from the $i$th column if the result is a partition and by sending it to $0$ otherwise.  These operators were introduced as \emph{Schur operators} by Fomin~\cite{fomin} and further discussed by Fomin and Greene~\cite{fomingreene} in the context of noncommutative Schur functions. They can also be seen as refinements of the raising and lowering operators $U$ and $D$ acting on Young's lattice as defined by Stanley \cite{stanley} in his study of differential posets.

It was noted in \cite{fomingreene} that the $u_i$ give a representation for the \emph{local plactic monoid} as they satisfy the relations:
\begin{alignat*}{2}
    u_{i} u_{j} &= u_{j}  u_{i} &\qquad \text{for } |i - j|  &\geq 2, \\
    u_i  u_{i +  1} u_i &=  u_{i  +  1}  u_i  u_i,  \\
    u_{i  +  1 }   u_i   u_{i  +  1}  &=   u_{i  +  1}    u_{i  +  1}   u_i. 
\end{alignat*}
(In particular, the $u_i$ satisfy the classical Knuth relations of the plactic monoid---see for instance \cite{LascouxSchutzenberger}.)
The current authors proved in \cite{LS} (see also Meinel \cite{Meinel:2019dgk}) that the $u_i$ also satisfy the additional degree 4 relation 
\[u_{i  +  1}    u_{i  +  2}   u_{i  +  1}  u_i =  u_{i  +  1}   u_{i  +  2 }   u_i u_{i  +  1}\]
and that this relation along with the local plactic relations characterize the algebra generated by the $u_i$, therein called the \emph{algebra of Schur operators}.  

It was also noted in \cite{fomin} (using the fact that the down-operators can be thought of as transposes of the up-operators) that the $d_i$ satisfy:
\begin{alignat*}{2}
    d_{j} d_{i} &= d_{i}  d_{j} & \qquad \text{for } |i - j| &\geq 2,   \\
    d_{i}  d_{i+1} d_{i} &=  d_{i}  d_{i}  d_{i+1}, & \\
    d_{i+1}   d_{i}   d_{i+1}  &=   d_{i}    d_{i+1}   d_{i+1}, &
\end{alignat*}
and that together the $u_i$ and $d_i$ satisfy:
\begin{alignat*}{2}
    d_i  u_j &= u_j d_i  & \qquad \text{for } i \neq j,   \\
    d_1  u_1 &= id,  & \\
    d_{i+1} u_{i + 1}  &= u_i d_i. & 
\end{alignat*}

In this paper we give a complete description of the algebra generated by the $u_i$ and $d_i$, which we call the \emph{algebra of up- and down-operators for Young's lattice}. Surprisingly, the following theorem shows that quadratic relations suffice to give a presentation of this algebra.
\begin{theorem}
\label{maintheorem}
The algebra of up- and down-operators for Young's lattice is defined by the relations:
\begin{alignat*}{2}
     u_i  u_j &=  u_j  u_i  &&\qquad \text{for } | i-  j| \geq 2,  \\
    d_i d_j &= d_j d_i  && \qquad \text{for }|i-j| \geq 2,  \\
    d_i  u_j &= u_j d_i  &&  \qquad \text{for } i \neq j,   \\
    d_1  u_1 &= id,  & \label{5}\\
    d_{i+1} u_{i + 1} &= u_i d_i. & 
\end{alignat*}
\end{theorem}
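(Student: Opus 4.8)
The plan is to show that the algebra $\mathcal A$ presented abstractly by the five families of quadratic relations is isomorphic to the algebra $\mathcal A'$ of operators actually acting on Young's lattice. Since the operators on Young's lattice are known to satisfy all five families (these are exactly the relations recalled from \cite{fomin} and \cite{fomingreene}), there is a surjective algebra homomorphism $\varphi\colon \mathcal A \to \mathcal A'$. The entire content is therefore injectivity of $\varphi$, which amounts to producing enough consequences of the abstract relations to put every element of $\mathcal A$ into a canonical normal form, and then showing that distinct normal forms act distinctly on partitions.

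First I would derive a \emph{straightening} or \emph{normal form} for words in the $u_i$ and $d_i$ using only the given quadratic relations. The natural target is a form in which all down-operators are moved to the right of all up-operators, i.e. every element is a linear combination (in fact, I expect, a single term up to the $d_1u_1=\mathrm{id}$ collapses) of monomials $u_{\mathbf a}\, d_{\mathbf b}$ where $u_{\mathbf a}$ and $d_{\mathbf b}$ are themselves in some canonical order. The commutation relations $u_iu_j=u_ju_i$ and $d_id_j=d_jd_i$ for $|i-j|\ge 2$, together with the mixed commutation $d_iu_j=u_jd_i$ for $i\ne j$, let me transpose most adjacent pairs freely; the only obstructions are adjacent pairs $d_iu_i$ and adjacent pairs with $|i-j|=1$ among like operators. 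For the former I use $d_1u_1=\mathrm{id}$ and $d_{i+1}u_{i+1}=u_id_i$ to rewrite any $d_iu_i$ either as the identity or as $u_{i-1}d_{i-1}$, strictly lowering the relevant index; this is a terminating rewriting because each application decreases a well-chosen monovariant (for instance the sum of indices appearing, or a lexicographic statistic on the interface between $u$'s and $d$'s). The main subtlety is to check confluence: I must verify that the local plactic-type relations and the degree-$4$ relation on the $u_i$ alone (guaranteed by \cite{LS}), and the symmetric statements for the $d_i$, are \emph{derivable} from the quadratic relations once one is allowed to pass operators past each other through the $d_iu_i$ rewrites. This is the crux.

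I expect the main obstacle to be precisely this derivation of the higher-degree relations of the (local) plactic monoid from the quadratic presentation. Concretely, I would aim to prove the braid-type relation $u_iu_{i+1}u_i=u_{i+1}u_iu_i$ as a \emph{consequence} of the quadratic relations by the following device: multiply a candidate identity on the right by a suitable $d_j$, use the mixed relations $d_{i+1}u_{i+1}=u_id_i$ and $d_ju_k=u_kd_j$ to shuttle the $d_j$ leftward through both sides, reduce using $d_1u_1=\mathrm{id}$, and then invoke injectivity of the \emph{up-multiplication} action (the fact that right multiplication by $d_j$ on the operator side has a controlled kernel) to cancel it. In other words, the down-operators act as ``partial inverses'' to the up-operators, and this partial invertibility should force the plactic relations among the $u_i$ to hold abstractly. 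The technical heart is bookkeeping the commutation of a single $d_j$ through an arbitrary word in the $u_i$, which the quadratic relations make completely algorithmic.

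Finally, once the normal form $u_{\mathbf a} d_{\mathbf b}$ is established and shown to be unique modulo the relations, I would complete injectivity of $\varphi$ by exhibiting, for each such normal-form monomial, a partition $\lambda$ (or a small family of test partitions) on which it acts nontrivially and distinguishably from all other normal forms—for instance by choosing $\lambda$ so that $d_{\mathbf b}$ removes exactly the boxes recorded by $\mathbf b$ without annihilating, and $u_{\mathbf a}$ then adds the boxes recorded by $\mathbf a$, yielding a matrix coefficient between two explicit partitions that no other monomial can produce. Linear independence of the operators $\{u_{\mathbf a}d_{\mathbf b}\}$ then follows from a triangularity argument on these matrix coefficients, giving $\ker\varphi=0$ and hence the desired isomorphism $\mathcal A\cong\mathcal A'$.
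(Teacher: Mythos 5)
Your overall frame (a surjection from the abstractly presented algebra onto the operator algebra, injectivity via a normal form, and the recognition that the crux is deriving the plactic-type relations from the quadratic ones) matches the paper's strategy in outline, but both of your key mechanisms have genuine gaps. First, the straightened words $u_{\mathbf a}d_{\mathbf b}$ do not form a normal form. The rewriting that pushes every $d$ to the right of every $u$ does terminate, but it is not confluent, and its outputs are far from unique modulo $I$ even after putting $\mathbf a$ and $\mathbf b$ separately into any canonical (plactic) form. Concretely, $u_1u_1d_1d_2$ and $u_1d_2$ are both straightened (no $d$ stands to the left of any $u$, so none of your rewrite rules applies to either), their $u$-parts and $d$-parts differ even in length, yet both have weight $(1,-1,0,\dots)$ and $\alpha$-vector $(0,1,0,\dots)$, so by Corollary~\ref{whensame} they are equal as operators on $\CC[\YY]$. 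Deciding exactly when $u_{\mathbf a}d_{\mathbf b}\equiv u_{\mathbf a'}d_{\mathbf b'}$ and deriving all such coincidences from the quadratic relations is essentially the original problem, which your outline leaves open; it also falsifies your concluding step, since the family $\{u_{\mathbf a}d_{\mathbf b}\}$ is not linearly independent as operators, so no triangularity argument on matrix coefficients can succeed. The paper sidesteps non-uniqueness entirely by choosing a very different, interleaved canonical word $[x]_{m,n}=\overline 1^{\,m}\cdots\overline n^{\,m}\,n^{\beta_n}\overline n^{\,\alpha_n}\cdots 1^{\beta_1}\overline 1^{\,\alpha_1}$, which is manifestly a function of the operator invariants $(\alpha(x),w(x))$ (so uniqueness is automatic), and then proves $u_x\equiv u_{[x]_{m,n}}\pmod J$ by induction on the length of $x$ (Proposition~\ref{u_x=u_[x]}), having first reduced to monomial identities by showing $I$ is a binomial ideal (Proposition~\ref{binomialideal}), a step you assume implicitly.

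Second, your device for obtaining the Knuth relations---right-multiply by some $d_j$, shuttle it through, then ``invoke injectivity of right multiplication by $d_j$'' to cancel it---is not available. Right multiplication by $d_j$ is not injective even in the concrete operator algebra: $(d_2d_1-d_1d_2)\,d_2=0$ on $\CC[\YY]$ (this is the relation $d_2d_1d_2=d_1d_2d_2$, an instance of \eqref{11}), while $d_2d_1\neq d_1d_2$ as operators. Moreover, appealing to any operator-side property in order to cancel inside the abstract algebra would be circular, since identifying the two algebras is precisely what is being proved. The mechanism that actually works, and which the paper uses, is insertion rather than cancellation: Lemma~\ref{newidentity} shows that the long word $d_1d_2\cdots d_n\,u_n\cdots u_2u_1$ is equivalent to the identity modulo the quadratic relations alone, and the higher relations of Lemma~\ref{discardedrelations}---$u_i\equiv u_id_iu_i$, $d_i\equiv d_iu_id_i$, and the Knuth relations \eqref{2}, \eqref{10}, \eqref{4}, \eqref{11}---are then derived by inserting such identity words and commuting letters using only \eqref{1}--\eqref{7}. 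For instance, the coincidence above is resolved by $u_1d_2\equiv u_1(d_2u_2d_2)=u_1(d_2u_2)d_2\equiv u_1u_1d_1d_2$, using \eqref{9} and then \eqref{7}; note that \eqref{9} is itself a nontrivial consequence of the quadratic relations, obtained exactly by this insertion technique, and that any derivation of this coincidence must temporarily move $d$'s back to the left of $u$'s, i.e.\ run against your rewriting order.
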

In particular, it follows that the local plactic relations are implied by the quadratic relations in Theorem \ref{maintheorem}.

In contrast, we also give a complete description of the subalgebra generated by $u_t$ and $d_t$ for a fixed $t>1$ and show that it cannot be presented using relations of bounded degree.  

We provide necessary background information about partitions and the up- and down-operators in Section 2.  The characterization of the algebra of up- and down- operators is given in Section 3, and a discussion of subalgebras can be found in Section 4.

\section{Preliminaries}

In this section, we discuss some background on partitions and up- and down-operators.
\subsection{Partitions}
A \emph{partition} $\lambda = (\lambda_1, \lambda_2, \ldots)$ of $|\lambda| =  \sum_{i=1} \lambda_i$ is a sequence of nonincreasing nonnegative integers.  We associate to each partition a collection of left-aligned boxes with $\lambda_i$ boxes in the $i$th row called the \emph{Young diagram} of $\lambda$.  We define the \emph{conjugate partition} $\lambda'$ to be the partition whose Young diagram is obtained by reflecting the Young diagram of $\lambda$ across the main diagonal.

We consider the partial order on partitions $\lambda$ and $\mu$ such that $\mu \leq \lambda$ if and only if the Young diagram of $\mu$ fits inside the Young diagram of $\lambda$, that is, $\mu_i \leq \lambda_i$ for all $i$.  Note that this means that if $\mu \leq \lambda$, then $\lambda$ covers $\mu$ (denoted $\mu \lessdot \lambda$) if and only if $\lambda / \mu$ is a single box, where $\lambda / \mu$ is the \emph{skew Young diagram} consisting of all boxes in $\lambda$ that are not in $\mu$. We take \emph{Young's lattice} $(\YY, \leq)$ to be the partially ordered set of partitions with the above partial order. 

\subsection{Words in the alphabet}
Let $\NN = \{ 1, 2, \ldots \}$, $\overline{\NN} = \{ \overline{1}, \overline{2}, \ldots \}$, and $\Gamma = \NN \cup \overline{\NN}$.  We refer to elements $1, 2, \ldots$ of $\NN$ as unbarred letters and elements $\overline{1}, \overline{2}, \ldots$ of $\overline{\NN}$ as barred letters.

Let $x = x_1 \cdots x_{\ell}$ be a word of \emph{length} $\ell$ in the alphabet $\Gamma$.  The \emph{weight} of $x$ is the vector $w(x) = (w_1(x), w_2(x), \ldots )$ where 
\[
w_i(x) = (\text{the number of times $i$ appears in $x$}) -  (\text{the number of times $\overline{i}$ appears in $x$}).
\]
We also define the $\alpha$-\emph{vector} of $x$ to be $\alpha(x) = (\alpha_1(x), \alpha_2(x), \ldots)$ where
\[
\alpha_i(x) = \max \{ w_{i+1}(\tilde{x}) - w_i(\tilde{x}) \mid \tilde x \text{ is a suffix subword of } x  \}.
\]  Here a suffix subword $\tilde{x}$ is a word of the form $\tilde{x} = x_j x_{j+1} \cdots x_{\ell}$ for some $1 \leq j \leq \ell + 1$. When $j = \ell +1$, $\tilde{x}$ is the empty word, in which case $w_{i+1}(\tilde{x}) = w_i(\tilde{x}) = 0$, so it follows that $\alpha_i(x) \geq 0$ for all $i$.

\begin{example}
Let $x = 1 1 \overline{3} \overline{3}  \overline{2}  3 2  \overline{1} 2 1 $.  Then $w(x) = (2, 1, -1, 0, \ldots)$ and $\alpha(x) = (2, 0, 1, 0, \ldots)$. For instance, for $\alpha_1(x)=2$, the maximum value of $w_2(\tilde x)-w_1(\tilde x)$ first occurs when $\tilde x = 2\overline{1}21$.
\end{example}

\subsection{Up-operators and down-operators} 
Let $\U$ be the free associative algebra over the complex field $\CC$ generated by elements $u_{i}$ for $i \in \Gamma$. We will write $d_i = u_{\overline{i}}$ for all barred letters $\overline{i}$.  For any word $x = x_1 x_2 \ldots x_{\ell}$ in the alphabet $\Gamma$, we define $u_x = u_{x_1}u_{x_2} \cdots u_{x_{\ell}}$.
We also use the alternate notation  $\s i = u_i$  for $i \in \Gamma$.  To avoid potential confusion in the future, we note now that $\s{( i   +  j  )}$ denotes  $u_{i+j}$ and \emph{not} the sum $u_i + u_j$. 

Let $\CC[\YY]$ be the complex vector space with basis $\YY$.  We define an action of $\U$ on $\CC[\YY]$ in the following way.  For $\lambda \in \YY$ and $i \in \NN$, we let
\[
u_i (\lambda) =
\begin{cases}
\mu & \text{if } \mu \in \YY \text { and } \mu/ \lambda \text{ is a single box in column $i$},\\
0 & \text{otherwise,}
\end{cases}
\]
and
\[
d_i (\lambda) =
\begin{cases}
\mu & \text{ if } \mu \in \YY \text { and }  \lambda / \mu \text{ is a single box in column $i$},\\
0 & \text{otherwise}.
\end{cases}
\]

\begin{example}
	Let $\lambda = (3,1)$. Then $u_2(\lambda) = (3,2)$, $d_3 u_2(\lambda) = (2,2)$, but $d_1d_3u_2(\lambda) = 0$ since subtracting a box from the first column does not yield a partition.
	\ytableausetup{smalltableaux}
	\[\ydiagram{3,1} \;\xrightarrow{u_2}\; \ydiagram{3,2} \;\xrightarrow{d_3}\; \ydiagram{2,2} \;\xrightarrow{d_1}\; 0\]
\end{example}

Note that $u_i(\lambda)$ is either $0$ or a partition that covers $\lambda$ in $\YY$, so we refer to $u_i$ as an \emph{up-operator}, and similarly we call $d_i$ a \emph{down-operator}. These operators were introduced by Fomin \cite{fomin} under the name \emph{Schur operators}.


The action of $u_x$ on partitions is determined by the weight and $\alpha$-vector of $x$ as follows.

\begin{proposition} \label{operatoraction}
	Let $x$ be a word and $\lambda \in \YY$.  Then
	\[
	u_x (\lambda) =
	\begin{cases}
		(\lambda'_1 + w_1(x), \lambda'_2 + w_2(x), \ldots)' & \text{ if } \lambda'_i - \lambda'_{i+1} \geq \alpha_i(x) \text{ for all } i, \\
		0  & \text{ otherwise}.
	\end{cases}
	\]
\end{proposition}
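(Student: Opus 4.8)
The plan is to induct on the length $\ell$ of $x$, reformulating everything in terms of column heights, i.e.\ the conjugate partition. Recall that $\lambda'_i$ is the height of column $i$, that applying $u_i$ increases this height by one while applying $d_i$ decreases it by one, and that a column-height vector represents a genuine partition exactly when it is nonincreasing and nonnegative. Thus $u_i(\nu)\neq 0$ iff $\nu'_{i-1}>\nu'_i$ (with the convention $\nu'_0=\infty$, so $u_1$ always succeeds), and $d_i(\nu)\neq 0$ iff $\nu'_i>\nu'_{i+1}$. For the base case $\ell=0$ the word is empty, $w(x)=\alpha(x)=0$, and $u_x=\mathrm{id}$; the claimed formula holds since $\lambda'$ is automatically nonincreasing.

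For the inductive step I would write $x=c\,y$ with $c=x_1$ and $y=x_2\cdots x_\ell$, so that $u_x=u_c u_y$ first applies $u_y$ and then the single operator $u_c$. The two facts driving the step are: first, $w(x)$ equals $w(y)$ adjusted in the single coordinate named by $c$ (increment column $k$ if $c=k$, decrement column $k$ if $c=\overline k$); and second, the suffix subwords of $x$ are exactly those of $y$ together with $x$ itself, whence $\alpha_i(x)=\max\!\big(\alpha_i(y),\,w_{i+1}(x)-w_i(x)\big)\ge\alpha_i(y)$. If the condition for $y$ fails, then $u_y(\lambda)=0$ forces $u_x(\lambda)=0$, and since $\alpha_i(x)\ge\alpha_i(y)$ the condition for $x$ fails at the same index, so both sides agree. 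If the condition for $y$ holds, the inductive hypothesis gives $u_y(\lambda)=\mu$ with $\mu'_j=\lambda'_j+w_j(y)$, and it remains to apply $u_c$ to $\mu$.

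The crux is a short computation matching the success criterion for $u_c$ on $\mu$ with the single new constraint contributed by the suffix $x$ to the $\alpha$-vector. If $c=k$ is unbarred, then $u_k(\mu)\neq0$ iff $\mu'_{k-1}-\mu'_k\ge1$, which unwinds to $\lambda'_{k-1}-\lambda'_k\ge 1+w_k(y)-w_{k-1}(y)$; this is precisely the new term $w_k(x)-w_{k-1}(x)$ entering $\alpha_{k-1}(x)$. The barred case $c=\overline k$ is symmetric, producing instead the constraint at index $k$. I would then check that no other index is affected: for every $i$ other than the relevant one, the new suffix term $w_{i+1}(x)-w_i(x)$ is bounded above by $w_{i+1}(y)-w_i(y)\le\alpha_i(y)$, so $\alpha_i(x)=\alpha_i(y)$ and no constraint is added. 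The boundary behaviour of $u_1$---which always succeeds and correspondingly adds no new $\alpha$-constraint, since there is no $\alpha_0$---falls out of the same bookkeeping. When the new constraint is met, $u_c(\mu)$ has column heights $\lambda'_j+w_j(x)$ as required; when it fails, $u_c(\mu)=0=u_x(\lambda)$ and the $x$-condition fails at exactly that index. The main obstacle is purely this index-by-index verification that prepending one letter perturbs the $\alpha$-vector in exactly one place, by exactly the amount encoding feasibility of the final box move.
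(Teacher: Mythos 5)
Your proposal is correct and rests on the same idea as the paper's proof: the intermediate results of applying $u_x$ right-to-left are indexed by the suffix subwords $\tilde x$, have column heights $\lambda'_i + w_i(\tilde x)$, and are partitions precisely when $\lambda'_i - \lambda'_{i+1} \geq w_{i+1}(\tilde x) - w_i(\tilde x)$, which yields the $\alpha$-vector condition. The paper verifies this directly for all suffixes simultaneously, whereas you package the identical computation as an induction on word length via the recursion $\alpha_i(x) = \max\bigl(\alpha_i(y),\, w_{i+1}(x) - w_i(x)\bigr)$; this is an organizational difference only, and your index-by-index bookkeeping checks out.
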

\begin{proof}
	We have $u_x(\lambda) \neq 0$ if and only if $u_{\tilde{x}}(\lambda)$ is a partition for each suffix subword $\tilde{x}$ of $x$.  Fix some $\tilde{x}$ and suppose $\mu = u_{\tilde{x}}(\lambda) \neq 0$.  We then have $\mu_i' = \lambda_i' + w_i(\tilde{x})$ for all $i$.  The condition for $\mu$ to be a partition is that $\mu_i' \geq \mu_{i+1}'$ for all $i$, or equivalently 
	\[
	\lambda_i' + w_i(\tilde{x}) \geq \lambda_{i+1}' + w_{i+1}(\tilde{x}).
	\]
	Rearranging this gives 
	\[
	\lambda_i' - \lambda_{i+1}'  \geq  w_{i+1}(\tilde{x}) - w_i(\tilde{x}). 
	\]
	By the definition of $\alpha_i(x)$, these inequalities hold for all suffix subwords $\tilde{x}$ if and only if $\lambda_i' - \lambda_{i+1}'  \geq \alpha_i(x)$.  
\end{proof}

The following corollary then follows from Proposition~\ref{operatoraction}.
\begin{corollary} \label{whensame}
	Let $x$ and $y$ be words.  Then $u_x$ and $u_y$ act identically on $\YY$ if and only if $\alpha(x) = \alpha(y)$ and $w(x) = w(y)$.
\end{corollary}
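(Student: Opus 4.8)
The plan is to read everything off Proposition~\ref{operatoraction}, which shows that the action of $u_x$ on $\YY$ is controlled by exactly two pieces of data: the vector $\alpha(x)$ determines the \emph{support} of the action (a partition $\lambda$ is sent to $0$ precisely when $\lambda_i' - \lambda_{i+1}' < \alpha_i(x)$ for some $i$), while $w(x)$ determines the \emph{output} on the partitions that survive, via $u_x(\lambda) = (\lambda_1' + w_1(x), \lambda_2' + w_2(x), \ldots)'$. The ($\Leftarrow$) direction is then immediate: if $w(x) = w(y)$ and $\alpha(x) = \alpha(y)$, the two cases of Proposition~\ref{operatoraction} agree for every $\lambda$, so $u_x(\lambda) = u_y(\lambda)$. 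The content is in the converse.

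For ($\Rightarrow$), assume $u_x$ and $u_y$ act identically, and recover the $\alpha$-vectors first. Suppose toward a contradiction that $\alpha(x) \neq \alpha(y)$; after possibly swapping $x$ and $y$ (the hypothesis is symmetric), there is an index $i$ with $\alpha_i(x) < \alpha_i(y)$. I would then build a single extremal test partition: since $x$ has finite length, $w_j(\tilde x) = 0$ for every suffix $\tilde x$ once $j$ exceeds the largest letter of $x$, so $\alpha_j(x) = 0$ for all large $j$. Hence the nonnegative integers $\alpha_j(x)$ are eventually zero and can serve as the consecutive differences of a genuine partition: setting $\lambda_j' = \sum_{k \geq j} \alpha_k(x)$ defines a partition $\lambda'$, and thus a partition $\lambda = (\lambda')'$, with $\lambda_j' - \lambda_{j+1}' = \alpha_j(x)$ for every $j$. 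By Proposition~\ref{operatoraction} this gives $u_x(\lambda) \neq 0$, whereas at the index $i$ we have $\lambda_i' - \lambda_{i+1}' = \alpha_i(x) < \alpha_i(y)$, forcing $u_y(\lambda) = 0$. This contradicts $u_x(\lambda) = u_y(\lambda)$, so $\alpha(x) = \alpha(y)$.

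With the $\alpha$-vectors equal, I would finish by evaluating both operators on any partition $\lambda$ for which $\lambda_i' - \lambda_{i+1}' \geq \alpha_i(x)$ holds for all $i$ (the partition constructed above already works). Then $u_x(\lambda)$ and $u_y(\lambda)$ are both nonzero, and Proposition~\ref{operatoraction} expresses them as the conjugates of $(\lambda_i' + w_i(x))_i$ and $(\lambda_i' + w_i(y))_i$. Equating these and cancelling $\lambda_i'$ coordinatewise yields $w_i(x) = w_i(y)$ for all $i$, i.e.\ $w(x) = w(y)$, completing the proof.

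The only real obstacle is the construction of the extremal test partition whose conjugate has consecutive differences exactly equal to $\alpha(x)$; once its existence is justified (which rests on checking that $\alpha_j(x)$ vanishes for large $j$), the remainder is routine bookkeeping with Proposition~\ref{operatoraction}. I would be slightly careful to confirm that this single partition simultaneously detects a discrepancy in $\alpha$ and, in its absence, pins down $w$, so that no further test partitions are needed.
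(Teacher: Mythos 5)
Your proof is correct and follows essentially the same route as the paper: both directions are read off Proposition~\ref{operatoraction}, and the forward direction uses the same extremal test partition $\lambda$ with $\lambda_i' - \lambda_{i+1}' = \alpha_i(x)$ to separate first the $\alpha$-vectors and then the weights. The only difference is that you explicitly construct this $\lambda$ (via $\lambda_j' = \sum_{k \geq j} \alpha_k(x)$, checking $\alpha_j(x)$ vanishes for large $j$), a detail the paper leaves implicit.
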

\begin{proof}
	The backwards implication is immediate from Proposition~\ref{operatoraction}. For the forward direction, suppose $\alpha(x) \neq \alpha(y)$. Then we may assume without loss of generality that $\alpha_j(x) < \alpha_j(y)$ for some $j$. Taking $\lambda$ such that $\lambda'_i-\lambda'_{i+1} = \alpha_i(x)$, we have $u_x(\lambda) \neq 0 = u_y(\lambda)$, so $u_x$ and $u_y$ do not act identically. If instead $\alpha(x) = \alpha(y)$ but $w(x) \neq w(y)$, then for this same choice of $\lambda$, $u_x(\lambda) \neq u_y(\lambda)$ by Proposition~\ref{operatoraction}.
\end{proof}

 It was noted in \cite{fomin} that $u_i$ and $d_i$ are transposes with respect to the basis $\YY$, which we may write as $u^T_i = d_i$. Also in \cite{fomin}, various relations among the $u_i$ and $d_i$ were described, including the local plactic relations and various quadratic relations (as described in Section 3). Our main result will be to show that in fact these quadratic relations generate all relations between these operators.

\section{The algebra of up- and down-operators} \label{secupanddown}
Let $I$ be the two-sided ideal consisting of all elements of $\U$ that annihilate $\CC[\YY]$.  We call the algebra $\U/I$ the \emph{algebra of up- and down-operators for Young's lattice}.  Let $J$ be the two-sided ideal generated by the following relations.
\begin{alignat}{2}
     u_i  u_j &\equiv  u_j  u_i  &&\qquad\text{ for }| i-  j| \geq 2,  \label{1}\\
    d_i d_j &\equiv d_j d_i  &&\qquad\text{ for }|i-j| \geq 2,  \label{3}\\
    d_i  u_j &\equiv u_j d_i  &&\qquad\text{ for } i \neq j,   \label{6}\\
    d_1  u_1 &\equiv id,  && \label{5}\\
    d_{i+1} u_{i + 1} &\equiv u_i d_i. &&  \label{7}
\end{alignat}
Our main result (Theorem~\ref{maintheorem}) will be to show that $I = J$. We first verify that $J \subseteq I$.

\begin{proposition} \label{JsubsetI}
The inclusion of ideals $J \subseteq I$ holds.
\end{proposition}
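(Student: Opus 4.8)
The plan is to reduce the inclusion to a short list of elementary computations using Corollary~\ref{whensame}. Since $I$ is a two-sided ideal and $J$ is generated by the relations \eqref{1}, \eqref{3}, \eqref{6}, the relation $d_1 u_1 \equiv \mathrm{id}$, and \eqref{7}, it suffices to show that each generating relation, viewed as an element of $\U$, annihilates $\CC[\YY]$; equivalently, that the two sides of each relation act identically on $\YY$. The key point is that each of these relations is an identity of the form $u_x = u_y$ between two single words $x, y$ in the alphabet $\Gamma$, where the identity is $u_\epsilon$ for the empty word $\epsilon$: relation \eqref{6} reads $u_{\overline{i}\, j} = u_{j\,\overline{i}}$, the relation $d_1u_1 \equiv \mathrm{id}$ reads $u_{\overline{1}\,1} = u_\epsilon$, and \eqref{7} reads $u_{\overline{(i+1)}(i+1)} = u_{i\,\overline{i}}$. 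By Corollary~\ref{whensame}, each such identity holds on $\YY$ if and only if $w(x) = w(y)$ and $\alpha(x) = \alpha(y)$, so the whole proposition reduces to checking these two equalities for each relation.

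The weight condition is immediate in every case: the two sides of \eqref{1}, \eqref{3}, and \eqref{6} are rearrangements of the same two letters, while each side of the remaining two relations contains one barred and one unbarred copy of a single index (or nothing), so both sides have weight $0$. Thus all of the content lies in comparing $\alpha$-vectors, and here I would distinguish two regimes. For the \emph{separated} relations \eqref{1}, \eqref{3}, and the subcase $|i-j| \geq 2$ of \eqref{6}, the two indices are non-adjacent, so no single $\alpha_k$ can be influenced by letters of both indices (that would force $i, j \in \{k, k+1\}$ and hence $|i-j| \le 1$). Each $\alpha_k$ is therefore governed by the single letter of one index, which occurs exactly once on each side; since such a letter always admits both a suffix subword that contains it and one that avoids it, its contribution to $\alpha_k$ is independent of the order of the two commuting letters, and $\alpha(x) = \alpha(y)$ follows at once.

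The remaining cases carry the actual work: the adjacent subcases $j = i \pm 1$ of \eqref{6}, together with the two cancellation relations, where the indices coincide or are adjacent and the decoupling argument no longer applies. For these I would simply enumerate the at most three suffix subwords of each side and read off $\alpha_k$ from the definition for the finitely many indices $k$ that can be nonzero. For instance, for \eqref{7} both $\overline{(i+1)}(i+1)$ and $i\,\overline{i}$ have $\alpha$-vector with single nonzero entry $\alpha_i = 1$ (the value $1$ arising from the suffix $(i+1)$ on the left and from the suffix $\overline{i}$ on the right), while $\overline{1}\,1$ has the all-zero $\alpha$-vector, matching $u_\epsilon$. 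A small point requiring care is the boundary at the first column: there is no index $0$, so no constraint $\alpha_0$ ever appears, and the subcase $j = i-1$ of \eqref{6} simply does not occur when $i = 1$.

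Once $w(x) = w(y)$ and $\alpha(x) = \alpha(y)$ have been verified for every generating relation, Corollary~\ref{whensame} shows that each acts as the zero operator on $\CC[\YY]$, so every generator of $J$ lies in $I$; as $I$ is a two-sided ideal, $J \subseteq I$ follows. I expect the only genuine obstacle to be the bookkeeping in the adjacent and cancellation cases---ensuring that the finitely many potentially nonzero entries $\alpha_k$ of the two sides actually coincide---which amounts to a routine enumeration of short suffixes.
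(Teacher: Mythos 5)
Your proposal is correct and follows essentially the same route as the paper's proof: reduce the inclusion to showing each generating relation of $J$ acts identically on $\YY$, then verify $w(x)=w(y)$ and $\alpha(x)=\alpha(y)$ for each relation via Corollary~\ref{whensame}. The paper carries out this check only for relation~\eqref{7} and declares the rest ``similar,'' whereas you supply the omitted details (the decoupling argument for non-adjacent indices and the suffix enumeration for the adjacent and cancellation cases), all of which check out.
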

\begin{proof}
It suffices to show that for each of (\ref{1})--(\ref{7}), the two terms in the relation are in fact equivalent modulo $I$.  We show this for relation (\ref{7}); the other relations are similar. By Corollary \ref{whensame} we need only show $\alpha(x) = \alpha(y) \text{ and }  w(x) = w(y)$ where $x = i \overline{i}$ and $y = \overline{(i+1)}(i+1)$.  Indeed, $w(x) = (0,0, \dots) = w(y)$, while $\alpha(x) = (0, \ldots, 0, 1, 0, \ldots) = \alpha(y)$, where the $1$ occurs in the $i$th position.
\end{proof}
It therefore remains only to show that $I \subseteq J$. The next proposition proves that $I$ is a \emph{binomial ideal}, that is, $I$ is generated by elements of the form $u_x-u_y$.  
The proof of this proposition is very similar to that of Proposition~3.3 in \cite{LS}, but we include it here for completeness.
\begin{proposition} \label{binomialideal}
The ideal $I$ is a binomial ideal.
\end{proposition}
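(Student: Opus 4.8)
The plan is to show that $I$, the ideal of all words-differences that annihilate $\CC[\YY]$, is generated by binomials $u_x - u_y$. Since $\U$ has a natural basis given by the monomials $u_x$ (one for each word $x$ in the alphabet $\Gamma$), any element of $\U$ is a finite $\CC$-linear combination $\sum_x c_x\, u_x$. The key structural fact I would exploit is Corollary~\ref{whensame}: the operator $u_x$ depends only on the pair of statistics $(\alpha(x), w(x))$, and two words act identically exactly when these statistics agree. So I would define an equivalence relation on words by $x \sim y$ iff $\alpha(x)=\alpha(y)$ and $w(x)=w(y)$, and group the monomials appearing in any $f \in \U$ according to their $\sim$-class.

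The main step is to analyze when a linear combination $f = \sum_x c_x\, u_x$ lies in $I$, i.e.\ acts as zero on every $\lambda \in \YY$. Here I would use the explicit formula of Proposition~\ref{operatoraction}: for a fixed word $x$, the operator $u_x$ sends $\lambda$ either to $0$ (when some column-difference inequality $\lambda'_i - \lambda'_{i+1} \geq \alpha_i(x)$ fails) or to the single partition whose conjugate is $\lambda' + w(x)$. The crucial observation is that two words in \emph{different} $\sim$-classes can never ``collide'' in a way that allows cancellation on all of $\YY$: if $w(x) \neq w(y)$ then whenever both $u_x(\lambda)$ and $u_y(\lambda)$ are nonzero they land on distinct partitions, and if $w(x)=w(y)$ but $\alpha(x)\neq\alpha(y)$ then (as in the proof of Corollary~\ref{whensame}) there is a $\lambda$ on which exactly one of them is nonzero. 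The upshot I want to extract is that for $f$ to annihilate $\CC[\YY]$, the coefficients within each $\sim$-class must already sum to zero; that is, $\sum_{x \in [c]} c_x = 0$ for every $\sim$-class $[c]$.

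Granting that, the conclusion follows by a standard telescoping argument: within a single $\sim$-class, fix a representative $x_0$, and write
\[
\sum_{x \in [c]} c_x\, u_x \;=\; \sum_{x \in [c]} c_x\,(u_x - u_{x_0}),
\]
where I have used $\sum_{x\in[c]} c_x = 0$ to subtract off the multiple of $u_{x_0}$. Each difference $u_x - u_{x_0}$ is a binomial, and since $x \sim x_0$ each such binomial lies in $I$ by Corollary~\ref{whensame}. Summing over all $\sim$-classes exhibits an arbitrary $f \in I$ as a $\CC$-linear combination of binomials $u_x - u_y$ with $x \sim y$, each of which is in $I$; hence $I$ is generated (even as a vector space, a fortiori as an ideal) by such binomials, which is exactly the binomial-ideal claim.

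The delicate part is establishing the separation claim in the second paragraph: that membership in $I$ forces the coefficients to vanish class-by-class. Roughly, I expect to argue by induction or by a maximality/extremality choice of a partition $\lambda$. Given $f \in I$ with some nonzero $c_x$, I would pick a $\sim$-class $[c]$ whose common weight $w = w(\cdot)$ is, say, extremal in some total order refining the partial structure, then choose $\lambda$ with column gaps $\lambda'_i - \lambda'_{i+1}$ large enough that every word in $[c]$ acts nontrivially (possible by taking $\lambda$ deep inside the cone cut out by the $\alpha$-inequalities) while isolating the contribution landing on the target $\lambda' + w$. Evaluating $f(\lambda)$ and reading off the coefficient of the basis vector $(\lambda'+w)'$ yields $\sum_{x\in[c]} c_x = 0$. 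The care needed is in checking that words from other classes do not contaminate this coefficient, which is precisely where the two cases ($w(x)\neq w(y)$ versus equal weight but differing $\alpha$) from Corollary~\ref{whensame} must be handled cleanly.
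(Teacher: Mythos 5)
Your overall architecture is sound and matches the paper's: group monomials into $\sim$-classes via Corollary~\ref{whensame}, prove that membership in $I$ forces the coefficients to vanish class-by-class, and then telescope within each class to exhibit an arbitrary element of $I$ as a combination of binomials $u_x - u_{x_0}$ that lie in $I$. The telescoping step is correct. The problem is the mechanism you propose for the separation claim, which is the heart of the proof. You suggest choosing $\lambda$ ``deep inside the cone'' cut out by the $\alpha$-inequalities of the class $[c]$, i.e., with all column gaps $\lambda'_i - \lambda'_{i+1}$ large. But this choice destroys exactly the separation you need: if the gaps are large enough, then \emph{every} class of weight $w$ appearing in $f$ acts nontrivially on $\lambda$, and by Proposition~\ref{operatoraction} all of them send $\lambda$ to the same partition $(\lambda' + w)'$. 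Reading off the coefficient of that basis vector then yields only the aggregate identity $\sum_{[c'] \colon w([c'])=w}\ \sum_{x \in [c']} c_x = 0$, not the class-by-class vanishing $\sum_{x \in [c]} c_x = 0$. Choosing the weight $w$ extremal does not help either, since weight already separates classes for free (nonzero images for different weights land on different basis vectors); the contamination you must rule out comes precisely from classes with the \emph{same} weight and different $\alpha$-vectors, and making $\lambda$ large can never kill those.

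The fix is to make $\lambda$ \emph{tight} rather than deep, which is what the paper does. Since $I$ is homogeneous with respect to the weight grading of $\U$, reduce to the case where all words appearing have a common weight $w$; the classes involved then have pairwise distinct $\alpha$-vectors. Choose the class whose $\alpha$-vector $\alpha$ is lexicographically minimal, and take $\lambda$ with $\lambda'_i - \lambda'_{i+1} = \alpha_i$ \emph{exactly}. This class acts nontrivially on $\lambda$, while any other class, say with $\alpha$-vector $\alpha'$, satisfies $\alpha'_s > \alpha_s = \lambda'_s - \lambda'_{s+1}$ at the first position $s$ where $\alpha' \neq \alpha$, hence annihilates $\lambda$ by Proposition~\ref{operatoraction}. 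Evaluating at this $\lambda$ isolates the lex-minimal class and forces its coefficient sum to vanish; an induction on the number of classes (or, as in the paper, a reduction modulo the binomial ideal followed by a contradiction) finishes the argument. With this replacement your proof goes through; without it, the ``delicate part'' you flagged is a genuine gap rather than a routine verification.
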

\begin{proof}
 Let $I'$ be the two-sided ideal generated by all binomials $u_x - u_y$ such that $u_x \equiv u_y \pmod I$, and suppose $R \in I$.  Since $\U$ is graded by weight and $I$ is homogeneous with respect to weight, we may assume that all terms appearing in $R$ have weight $w$ for some $w = (w_1, w_2, \ldots)$.   We can then find $R' \equiv R \pmod{I'}$ for some
\begin{align*}
R' = \sum_{i=1}^n c_{x^{(i)}} u_{x^{(i)}},
\end{align*}
where $x^{(i)}$ is a word in $\Gamma$ of weight $w$, $u_{x^{(i)}} \not \equiv u_{x^{(j)}} \pmod{I}$ whenever $i \neq j$, and $0 \neq c_{x^{(i)}} \in \CC$ for all $i \in [n]$.  In particular, by Corollary~\ref{whensame}, the $\alpha(x^{(i)})$ are distinct, so suppose without loss of generality that they occur in lexicographic order.  

If $n \geq 1$, let $\lambda \in \YY$ be such that $\lambda_k' - \lambda_{k+1}' = \alpha_k(x^{(1)})$ for all $k$.  By Proposition~\ref{operatoraction}, $u_{x^{(1)}}(\lambda) \neq 0$. For each $i > 1$, by the lexicographic ordering, there exists some $s$ such that $\alpha_s(x^{(1)}) < \alpha_s(x^{(i)})$. Then by Proposition~\ref{operatoraction}, $u_{x^{(i)}}(\lambda) = 0$. Thus $0 = R'(\lambda) = c_{x^{(1)}}u_{x^{(1)}}(\lambda)$, which implies $c_{x^{(1)}} = 0$. This is a contradiction, so we must have $R'=0$. Thus $I=I'$.
\end{proof}

Our goal for the rest of this section is to show that if $u_x \equiv u_y \pmod{I}$, then $u_x \equiv u_y \pmod{J}$.  Our general strategy is as follows.  Let $[u_x]_I$ be the equivalence class of $u_x$ modulo $I$.  
We will construct a representative word $[x]$ such that $u_{[x]} \in [u_x]_I$. This representative will only depend on $\alpha(x)$ and $w(x)$, so if $u_x \equiv u_y \pmod{I}$, then $[x]=[y]$.
We will then show that $u_x \equiv u_{[x]} \pmod{J}$ and similarly for $y$, which will complete the proof. 
\begin{definition}
For a word $x$, define 
\begin{align*}
m(x) &= \max_{i \in \NN} \{-(\alpha_i(x) + w_i(x))\} \geq 0, \\
n(x) &= \max \{t \in \NN \mid t \text{ or } \overline{t} \text{ appears in } x\}.
\end{align*}
For any $m \geq m(x), n \geq n(x)$, we let
\[
[x]_{m,n} = (\overline{ 1}^{m} \cdots \overline{ n}^{m}) (  n^{\beta^m_n(x)} \overline{ n}^{\alpha_n(x)} \cdots   1^{\beta^m_1(x)} \overline{ 1}^{\alpha_1(x)})
\]
where $\beta^m_i(x) = \alpha_i(x) + w_i(x) + m$.
\end{definition}
Note that the definition of $m$ ensures that all of the exponents appearing in the definition of $[x]_{m,n}$ are nonnegative. We will often abbreviate $[x] = [x]_{m,n}$.
We now show that indeed $u_{[x]} \in [u_x]_I$.

\begin{proposition}
For any word $x$, $u_x \equiv  u_{[x]}  \pmod{I}$.
\end{proposition}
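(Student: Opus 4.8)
The plan is to invoke Corollary~\ref{whensame}, which reduces the claim $u_x \equiv u_{[x]} \pmod I$ to proving the two equalities $w([x]) = w(x)$ and $\alpha([x]) = \alpha(x)$. Both are purely combinatorial statements about the explicit word
\[
[x]_{m,n} = (\overline{1}^{\,m} \cdots \overline{n}^{\,m})(n^{\beta^m_n(x)} \overline{n}^{\,\alpha_n(x)} \cdots 1^{\beta^m_1(x)} \overline{1}^{\,\alpha_1(x)}),
\]
so the entire argument becomes a computation of its weight and $\alpha$-vector, with $\beta^m_i(x) = \alpha_i(x) + w_i(x) + m$.

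First I would dispose of the weight. For each $i \le n$, the prefix $\overline{1}^{\,m}\cdots \overline{n}^{\,m}$ contributes $-m$ to $w_i$, while the block pair $i^{\beta^m_i(x)}\overline{i}^{\,\alpha_i(x)}$ inside the suffix contributes $\beta^m_i(x) - \alpha_i(x) = w_i(x) + m$; these cancel to give $w_i([x]) = w_i(x)$. For $i > n \ge n(x)$ no letter $i$ or $\overline{i}$ occurs, so $w_i([x]) = 0 = w_i(x)$. This part is routine.

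The substance lies in computing $\alpha_i([x])$, and this is where I expect the main difficulty. Since $\alpha_i$ only sees the quantity $w_{i+1}(\tilde x) - w_i(\tilde x)$, the only letters that matter are $i+1$ and $\overline{i}$ (each raising it by $1$) and $i$ and $\overline{(i+1)}$ (each lowering it by $1$); all other letters are inert. I would therefore record, for each maximal block of $[x]$, its signed contribution to $w_{i+1} - w_i$, and maximize the suffix sum by scanning blocks from right to left, using that within a single block the extreme value occurs at a boundary. The relevant blocks occur, left to right, in the order $\overline{i}^{\,m}, \overline{(i+1)}^{\,m}$ (in the prefix) and then $(i+1)^{\beta^m_{i+1}(x)}, \overline{(i+1)}^{\,\alpha_{i+1}(x)}, i^{\beta^m_i(x)}, \overline{i}^{\,\alpha_i(x)}$ (in the suffix). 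The scan produces exactly two local maxima: the value $\alpha_i(x)$, attained by the suffix beginning at the block $\overline{i}^{\,\alpha_i(x)}$, and the value $w_{i+1}(x) - w_i(x)$, first attained once the suffix extends back past $(i+1)^{\beta^m_{i+1}(x)}$ and never exceeded afterward, since in the prefix the blocks $\overline{i}^{\,m}$ and $\overline{(i+1)}^{\,m}$ merely cancel one another and the remaining blocks are inert. Hence $\alpha_i([x]) = \max\{\alpha_i(x),\, w_{i+1}(x) - w_i(x)\}$.

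To finish, I would note that taking $\tilde x = x$ in the definition of $\alpha_i(x)$ gives $\alpha_i(x) \ge w_{i+1}(x) - w_i(x)$, so the maximum collapses to $\alpha_i(x)$. The only remaining care is with the boundary indices $i \ge n$, where one of the blocks $(i+1)^{\pm}$ or $i^{\pm}$ is absent or empty; in each such case the same scan (now with $w_{i+1}(\tilde x) \equiv 0$) yields $\alpha_i([x]) = \max\{\alpha_i(x), -w_i(x)\} = \alpha_i(x)$, using again that the full word realizes the bound. The main obstacle is thus not a single hard idea but the bookkeeping of the suffix-sum scan: one must check that the prefix $\overline{1}^{\,m}\cdots\overline{n}^{\,m}$ — deliberately written in increasing order — forces the $\overline{i}$ and $\overline{(i+1)}$ contributions to cancel, so that no suffix reaching into the prefix can beat $w_{i+1}(x) - w_i(x)$, and that no intermediate block produces a spurious larger local maximum.
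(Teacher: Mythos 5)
Your proposal is correct and follows essentially the same route as the paper's own proof: reduce to checking $w([x]) = w(x)$ and $\alpha([x]) = \alpha(x)$ via Corollary~\ref{whensame}, compute the weight by block cancellation, and compute $\alpha_i([x])$ by a right-to-left suffix scan whose only candidate maxima are $\alpha_i(x)$ (at the block $\overline{i}^{\,\alpha_i(x)}$) and $w_{i+1}(x)-w_i(x)$ (once the suffix absorbs the $(i+1)$-block or the full word), the latter dominated by the former since the full word is itself a suffix. Your explicit treatment of the boundary indices $i \ge n$ is a minor addition the paper leaves implicit, not a different argument.
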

\begin{proof}
Let $i \in \NN$. Then $w_i([x]) = -m+\beta_i^m(x)-\alpha_i(x) = w_i(x)$.  We now  show that $\alpha_i([x]) = \alpha_i(x)$.  For ease of notation, we will write $\alpha_i = \alpha_i(x), w_i = w_i(x),$ and $\beta_i = \beta^m_i(x)$.  Since $\alpha_i$ relies only upon the appearances of $i$, $\overline{i}$, $(i+1)$, and $\overline{(i+1)}$ in $x$, we need only consider the subword 
\[
\overline{i}^m \overline{(i+1)}^m (i+1)^{\beta_{i+1}} \overline{(i+1)}^{\alpha_{i+1}} i^{\beta_i} \overline{i}^{\alpha_i}.
\]
To calculate $\alpha_i([x])$, we need to find the maximum value of $w_{i+1}(\tilde x)-w_i(\tilde x)$ for each suffix subword $\tilde x$. This value only increases when adding an occurrence of $\overline i$ or $(i+1)$ to $\tilde x$. Thus we need only verify a few choices of $\tilde x$:
\begin{alignat*}{2}
	\tilde x &= \overline{i}^{\alpha_i}:\qquad& w_{i+1}(\tilde x)-w_i(\tilde x) &= \alpha_i,\\
	\tilde x &= (i+1)^{\beta_{i+1}} \overline{(i+1)}^{\alpha_{i+1}} i^{\beta_i} \overline{i}^{\alpha_i}:\qquad& w_{i+1}(\tilde x)-w_i(\tilde x) &= \beta_{i+1}-\alpha_{i+1}-\beta_i+\alpha_i\\&&& = w_{i+1}-w_i,\\
	\tilde x &= [x]:\qquad &w_{i+1}(\tilde x)-w_i(\tilde x) &= w_{i+1}-w_i.
\end{alignat*}
The maximum of these is just $\alpha_i$.
%
%
%
%
\end{proof}
We now wish to show that $u_x \equiv u_{[x]_{m,n}} \pmod{J}$ for sufficiently large $m$ and $n$.  To that end we will make use of the following two lemmas. As a reminder, we will use $\s{i}$ and $\s{\overline{i}}$ to represent $u_i$ and $d_i$, respectively.


\begin{lemma} \label{newidentity}
Let $x = \overline{1} \cdots \overline{n} n \cdots 1$ for any $n \in \NN$.  Then $u_x \equiv id \pmod{J}$.
\end{lemma}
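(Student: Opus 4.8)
The plan is to induct on $n$. Writing the operator product out, the claim is that
\[
u_x = d_1 d_2 \cdots d_n\, u_n u_{n-1} \cdots u_1 \equiv id \pmod{J};
\]
call this element $P_n$. The base case $n=1$ is immediate, since $P_1 = d_1 u_1 \equiv id$ is exactly relation (\ref{5}). For the inductive step I would establish the reduction $P_n \equiv P_{n-1} \pmod{J}$ and then invoke the inductive hypothesis. It is worth noting in advance that only relations (\ref{5}) and (\ref{7}) will enter the argument; the commutation relations (\ref{1}), (\ref{3}), (\ref{6}) are never needed.

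To carry out the reduction I would peel off the pair $d_n u_n$ by a cascade of applications of (\ref{7}). In $P_n$ the operators $d_n$ and $u_n$ are adjacent, sitting at the junction between the block of down-operators and the block of up-operators, so (\ref{7}) rewrites $d_n u_n$ as $u_{n-1} d_{n-1}$. The up-operator thus produced lands immediately to the right of the original $d_{n-1}$, making $d_{n-1} u_{n-1}$ adjacent, and (\ref{7}) applies again. Concretely, I would show by a short induction on $j$ (for $0 \le j \le n-1$) that after $j$ applications of (\ref{7}) the word has the normal form
\[
d_1 \cdots d_{n-j}\, u_{n-j}\, (d_{n-j} d_{n-j+1} \cdots d_{n-1})\, (u_{n-1} u_{n-2} \cdots u_1),
\]
where the middle block of down-operators is empty for $j=0$ (so that the $j=0$ form is $P_n$). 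Each further step rewrites the adjacent pair $d_{n-j} u_{n-j}$ as $u_{n-j-1} d_{n-j-1}$, lowering the index of the traveling up-operator by one and depositing one more down-operator at the front of the middle block.

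The cascade halts at $j = n-1$, where the traveling up-operator has index $1$ and the leading pair is $d_1 u_1$; the word reads $d_1 u_1\,(d_1 d_2 \cdots d_{n-1})(u_{n-1} \cdots u_1)$. A single use of (\ref{5}) cancels the leading $d_1 u_1 \equiv id$ and leaves precisely $d_1 \cdots d_{n-1}\, u_{n-1} \cdots u_1 = P_{n-1}$, which closes the induction. The only delicate point — and the main, if modest, obstacle — is the bookkeeping that guarantees at each stage the pair $d_{n-j} u_{n-j}$ really is adjacent, so that (\ref{7}) can be applied verbatim without first commuting any operators past one another. This is exactly what the displayed normal form certifies, and checking that one application of (\ref{7}) carries the $j$th form to the $(j+1)$st is a routine verification on the indices.
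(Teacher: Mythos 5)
Your proof is correct and takes essentially the same approach as the paper's: an induction reducing $d_1\cdots d_n u_n\cdots u_1$ to $d_1\cdots d_{n-1}u_{n-1}\cdots u_1$ by a cascade of applications of \eqref{7} finished off by a single use of \eqref{5}. The only difference is cosmetic—your cascade sends a traveling up-operator leftward through the down-block $d_1\cdots d_{n-1}$, while the paper's identity \eqref{specialidentity} sends a traveling down-operator rightward through the up-block $u_{n-1}\cdots u_1$—so the two arguments are mirror images using the same relations the same number of times.
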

\begin{proof}
First note the equivalence
\begin{align}
\s{\overline{n} n (n-1) \cdots 1} &\equiv \s{(n-1)(n-2) \cdots 1 }  \pmod{J}, \label{specialidentity}    
\end{align}
which holds by repeated application of (\ref{7}) and a single use of (\ref{5}).
Then
\begin{align*}
\s{\overline{1} \cdots \overline{n} n \cdots 1}
&\equiv \s{\overline{1} \cdots \overline{(n-1)} (n-1) \cdots 1}\\
&\equiv \s{\overline{1} \cdots \overline{(n-2)} (n-2) \cdots 1}\\
&\;\;\vdots\\
&\equiv id
\end{align*}
by repeated application of \eqref{specialidentity}.
\end{proof}

\begin{lemma} \label{discardedrelations}
The following equivalences hold modulo $J$:
\begin{align}
u_i &\equiv u_i d_i u_i,  &  \label{8}\\ 
d_i &\equiv  d_i u_i d_i,   &\label{9} \\
 u_i u_{i  +  1}  u_i &\equiv u_{i +  1} u_i u_i, &  \label{2}\\
d_i  d_{i+1} d_i &\equiv d_i d_i d_{i+1}, &  \label{4} \\
u_{i+1} u_i u_{i+1} &\equiv u_{i+1} u_{i+1} u_i, \label{10} \\
d_{i+1} d_i d_{i+1} &\equiv d_i d_{i+1} d_{i+1}. \label{11} 
\end{align}
\end{lemma}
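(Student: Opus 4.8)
The plan is to derive each of the six equivalences in Lemma~\ref{discardedrelations} directly from the defining quadratic relations (\ref{1})--(\ref{7}), working in the quotient $\U/J$ throughout. The two most basic identities (\ref{8}) and (\ref{9}) should come almost immediately: to prove $u_i \equiv u_i d_i u_i$, I would use relation (\ref{7}) to rewrite $d_i u_i$ and telescope down to $d_1 u_1 = id$ via (\ref{5}), much as in the proof of Lemma~\ref{newidentity}. Concretely, $\s{d_i u_i} = \s{u_{i-1} d_{i-1}} = \cdots$, so that $u_i d_i u_i$ collapses after applying (\ref{7}) repeatedly and then (\ref{5}); the identity (\ref{9}) is the transpose statement and follows by the symmetric argument (swapping the roles of the $u$'s and $d$'s and reversing the telescope).

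For the plactic-type relations (\ref{2}), (\ref{4}), (\ref{10}), (\ref{11}), the strategy is to introduce a down-operator, commute it past the letters it is allowed to pass (using (\ref{1}), (\ref{3}), (\ref{6})), apply the key relation (\ref{7}) to convert a $d_{i+1}u_{i+1}$ into $u_i d_i$ (or the reverse), and then cancel using (\ref{5}), (\ref{8}), or (\ref{9}). For instance, to prove (\ref{2}), namely $u_i u_{i+1} u_i \equiv u_{i+1} u_i u_i$, I would start from the left side, insert a factor of the form $d u$ that equals the identity modulo $J$ at a strategic spot (justified by (\ref{8}) or (\ref{5})), then use (\ref{6}) and (\ref{7}) to migrate a down-index and re-index the up-operators so that the two sides coincide. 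The relations (\ref{4}) and (\ref{11}) are the down-operator transposes of (\ref{2}) and (\ref{10}), so once I have a proof of the ``up'' version I expect to obtain the ``down'' version by the transpose symmetry $u_i^T = d_i$ (which exchanges $u \leftrightarrow d$ and reverses words, and which preserves $J$ since the generating relations are themselves closed under this involution).

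The main obstacle I anticipate is finding the exact sequence of insertions and commutations that realizes each plactic relation from the quadratic ones, since these relations are not obviously quadratic consequences. The delicate point is that relation (\ref{7}), $d_{i+1} u_{i+1} \equiv u_i d_i$, only rewrites an \emph{adjacent} pair $d_{i+1} u_{i+1}$, so before I can apply it I must first maneuver the relevant $d$ and $u$ into adjacency using the commutation relations (\ref{1}), (\ref{3}), (\ref{6}), and I must do so without disturbing the index pattern I am trying to produce. I would handle this by treating each relation separately, writing out the word on one side, tracking the index of the single down-operator I introduce as it telescopes via (\ref{7}), and verifying that the residual up-operators rearrange (via the two-apart commutations in (\ref{1})) into precisely the target word; the final cancellation of the auxiliary $du$ pair is then licensed by (\ref{5}) or by the already-established (\ref{8})/(\ref{9}). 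Because all six identities reduce to such bookkeeping once the right auxiliary factor is chosen, I expect no conceptual difficulty beyond identifying that factor for each case.
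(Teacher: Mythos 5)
Your outline for the plactic-type relations \eqref{2}, \eqref{4}, \eqref{10}, \eqref{11} --- insert an auxiliary $du$ pair, migrate its index using \eqref{7} and the commutations \eqref{1}, \eqref{3}, \eqref{6}, then cancel via \eqref{8} or \eqref{9} --- is sound and is essentially the paper's strategy; likewise your observation that the barred identities follow from the unbarred ones via the transpose anti-involution ($u_i\leftrightarrow d_i$, words reversed) is correct, since that involution permutes the generating relations of $J$. The genuine gap is in your proof of \eqref{8}, on which all of the above rests. You claim $u_id_iu_i$ ``collapses after applying \eqref{7} repeatedly and then \eqref{5},'' by telescoping $d_iu_i\equiv u_{i-1}d_{i-1}\equiv\cdots$ down to $d_1u_1$. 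For $i\geq 2$ this telescope halts after one step: applying \eqref{7} to $u_id_iu_i$ gives $u_iu_{i-1}d_{i-1}$, and the down-operator now sits at the right end of the word with no letter to its right, so no further instance of \eqref{7} or of \eqref{5} can ever apply (the only other available move, \eqref{7} in reverse on $u_{i-1}d_{i-1}$, just undoes the step). The telescoping in Lemma~\ref{newidentity} works only because the word $\overline{1}\cdots\overline{n}n(n-1)\cdots 1$ contains the descending staircase $n(n-1)\cdots 1$, which keeps supplying each newly created $d_j$ with an adjacent $u_j$; the word $u_id_iu_i$ has no such staircase. Note also that $d_iu_i$ is \emph{not} the identity modulo $I$ for $i\geq 2$ (it annihilates any $\lambda$ to which no box can be added in column $i$), so no argument treating $d_iu_i$ as collapsible on its own can be correct: the outer $u_i$ is essential.

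In fact, for $i\geq 2$ the only length-$3$ words reachable from $u_id_iu_i$ by the quadratic relations without changing length are $u_iu_{i-1}d_{i-1}$ and $d_{i+1}u_{i+1}u_i$, and none of the three contains the substring $d_1u_1$; hence \eqref{8} cannot be derived by length-preserving rewriting followed by a cancellation. Any correct derivation must first \emph{grow} the word, which is exactly what the paper does: it inserts the entire identity word $\overline{1}\cdots\overline{n}n\cdots 1$ of Lemma~\ref{newidentity} to the right of $u_n$, commutes $u_n$ inward with \eqref{6}, applies \eqref{7} in the \emph{upward} direction ($u_nd_n\equiv d_{n+1}u_{n+1}$) to create the block $d_{n+1}u_{n+1}u_n$, extracts that block to the front using \eqref{3} and \eqref{6}, collapses the leftover identity word by Lemma~\ref{newidentity} again, and only then converts back down with \eqref{7}. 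This insertion-and-extraction idea is the ingredient your proposal is missing; without it \eqref{8} and \eqref{9} are unproved, and your planned derivations of \eqref{2}, \eqref{4}, \eqref{10}, \eqref{11} have nothing to cancel against.
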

\begin{proof}
For (\ref{8}), we have
\begin{alignat*}{2}
    \s{n} &\equiv \s{n \overline{1} \cdots \overline{n} n \cdots 1} &&(\text{Lemma } \ref{newidentity}) \\
    &\equiv \s{ \overline{1} \cdots \overline{(n-1)}n\overline{n} n \cdots 1} &&(\ref{6}) \\
    &\equiv \s{ \overline{1} \cdots \overline{(n-1)} \, \overline{(n+1)} (n+1) n \cdots 1} &&(\ref{7}) \\
    &\equiv \s{\overline{(n+1)} (n+1) n \overline{1} \cdots \overline{(n-1)}(n-1)  \cdots 1} \qquad\qquad &&(\ref{3}), (\ref{6}) \\
    &\equiv \s{\overline{(n+1)} (n+1) n} &&(\text{Lemma }\ref{newidentity}) \\
    &\equiv \s{n \overline{n} n}. &&(\ref{7})
\end{alignat*}
For (\ref{2}) we have
\begin{alignat*}{2}
    \s{n (n+1) n} &\equiv \s{n (n+1) \overline{(n+1)} (n+1) n} \qquad\qquad&&(\ref{8}) \\
    &\equiv \s{n  \overline{(n+2)} (n+2) (n+1) n} &&(\ref{7})  \\
    &\equiv \s{  \overline{(n+2)} (n+2) n (n+1) n} &&(\ref{1}), (\ref{6}) \\
    &\equiv \s{ (n+1) \overline{(n+1)}  n (n+1) n} &&(\ref{7}) \\
    &\equiv \s{ (n+1)  n  \overline{(n+1)} (n+1) n} &&(\ref{6}) \\
    &\equiv \s{ (n+1)  n n \overline{n}  n} &&(\ref{7}) \\
    &\equiv \s{ (n+1)  n n}. &&(\ref{8})
\end{alignat*}
For (\ref{10}), assume
$n \geq 2$ (the case when $n=1$ is similar). Then
\begin{alignat*}{2}
    \s{(n+1)n(n+1)} &\equiv \s{(n+1)n \overline{n} n (n+1)} &&(\ref{8}) \\
    &\equiv  \s{(n+1)n (n-1)\overline{(n-1)}  (n+1)} \qquad\qquad&&(\ref{7}) \\
    &\equiv  \s{(n+1)n  (n+1) (n-1)\overline{(n-1)}}  &&(\ref{1}), (\ref{6}) \\
    &\equiv  \s{(n+1)n  (n+1) \overline{n} n}  &&(\ref{7}) \\
    &\equiv  \s{(n+1)n \overline{n}  (n+1)  n}  &&(\ref{6}) \\
    &\equiv  \s{(n+1) \overline{(n+1)} (n+1)  (n+1)  n}  &&(\ref{7}) \\
    &\equiv  \s{(n+1)  (n+1)  n}.  &&(\ref{8})
\end{alignat*}
The proofs of  (\ref{9}), (\ref{4}), and (\ref{11})  are similar to the proofs of  (\ref{8}), (\ref{2}), and (\ref{10}), respectively.  
\end{proof}

In particular, one can observe that \eqref{2} and \eqref{10} are \emph{Knuth relations}, which, together with \eqref{1}, verify that the quadratic relations imply that $J$ contains all the relations of the \emph{local plactic monoid} generated by the $u_i$ (see \cite{fomin}).

We are now ready to prove the heart of our main theorem.

\begin{proposition}
\label{u_x=u_[x]}
Let $x = x_1 \cdots x_{\ell}$ be a word.  Then there exist $M,N \in \NN$ such that $u_x \equiv u_{[x]_{m,n}}  \pmod{J}$ for all $m \geq M$, $n \geq N$.
\end{proposition}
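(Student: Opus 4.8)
The plan is to transform $u_x$ into $u_{[x]_{m,n}}$ by moving operators past one another using only the quadratic relations, augmented by the derived relations in Lemma~\ref{discardedrelations} (especially the ``absorption'' relations \eqref{8} and \eqref{9} and the plactic-type relations \eqref{2}, \eqref{4}, \eqref{10}, \eqref{11}). The first step is to choose $M$ and $N$ large enough that every exponent appearing in $[x]_{m,n}$ is nonnegative: taking $N = n(x)$ and $M = m(x)$ as defined above guarantees this, but I expect to need some additional slack so that I can freely insert canceling pairs $u_i d_i$ or $d_i u_i$ via Lemma~\ref{newidentity} and \eqref{8}--\eqref{9} without running out of ``room,'' so I would set $M$ and $N$ to be these bounds plus a buffer determined by the length $\ell$.

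The core of the argument is an induction on the length $\ell$ of $x$. First I would handle the base case, where $x$ is empty (or a single letter), directly from the definition of $[x]$. For the inductive step, write $x = x_1 x'$ where $x'$ has length $\ell - 1$, so by induction $u_{x'} \equiv u_{[x']_{m,n}} \pmod J$ for suitably large $m, n$. It then suffices to show that prepending the single letter $x_1$ and re-normalizing respects $J$: that is, $u_{x_1} u_{[x']_{m,n}} \equiv u_{[x]_{m',n'}} \pmod J$ for appropriate $m', n'$. Because $[x']_{m,n}$ is in a rigid canonical form (a block of pure down-operators $\overline 1^m \cdots \overline n^m$ followed by alternating blocks $i^{\beta_i} \overline i^{\alpha_i}$ running from $i = n$ down to $i = 1$), I can track exactly how inserting $u_{x_1}$ (either an up-operator $\s j$ or a down-operator $\s{\overline j}$) on the left perturbs the weight and $\alpha$-vector, and hence the exponents $\beta_i$ and $\alpha_i$.

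The mechanics of the inductive step break into the two cases $x_1 = j$ (unbarred) and $x_1 = \overline j$ (barred), and within each into subcases according to whether prepending $x_1$ increases $\alpha_{j-1}$, $\alpha_j$, or neither. In each subcase I would commute $u_{x_1}$ rightward past the leading blocks using \eqref{1}, \eqref{3}, \eqref{6} (operators on well-separated columns commute), and when it reaches a block on column $j$ or $j \pm 1$, use the relations \eqref{7}, \eqref{8}, \eqref{9}, \eqref{2}, \eqref{4}, \eqref{10}, \eqref{11} to absorb it, adjusting one exponent and restoring the canonical form; Lemma~\ref{newidentity} lets me introduce the auxiliary $\overline{(n+1)}(n+1)$ pairs needed to execute moves like those in the proof of \eqref{8}. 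The main obstacle I anticipate is the bookkeeping in the subcase where prepending $x_1$ raises an $\alpha$-coordinate: there the exponent $\beta$ and the exponent $\alpha$ in a single block must both change in a coordinated way, and I will need to verify that the resulting word is exactly $[x]_{m',n'}$ with $m', n'$ still above the required thresholds — in particular that enlarging $n$ to $n' = \max(n, j{+}1)$ or incrementing $m$ when column $1$ is touched does not disturb the normalization of the other blocks. Showing that the buffer in $M, N$ is never exhausted across all $\ell$ steps is the part requiring the most care, but it is a finite, monotone estimate rather than a genuinely hard inequality.
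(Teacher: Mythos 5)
Your overall frame---induction on the length of $x$, with the inductive step absorbing one letter into the canonical word $[\,\cdot\,]_{m,n}$---matches the paper's, but you peel the letter off the wrong end, and that is where the argument breaks. The paper writes $x = y\,x_\ell$ and appends $u_{x_\ell}$ on the \emph{right} of $u_{[y]}$; you write $x = x_1 x'$ and prepend $u_{x_1}$ on the \emph{left} of $u_{[x']}$. The canonical form and the suffix-based definition of $\alpha$ are built for right-absorption: an appended $u_t$ commutes leftward, by \eqref{1} and \eqref{6}, past every block $i^{\beta_i}\overline{i}^{\alpha_i}$ with $i< t-1$ and past $\overline{(t-1)}^{\alpha_{t-1}}$, so only a local computation at the column-$(t-1)$ and column-$t$ blocks remains. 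A prepended $u_j$, by contrast, must first cross the prefix $\overline{1}^{m}\cdots\overline{n}^{m}$, and your stated mechanism (``commute rightward past the leading blocks using \eqref{1}, \eqref{3}, \eqref{6}'') fails there: relation \eqref{6} requires $i\neq j$, so $u_j$ does \emph{not} commute past $\overline{j}^{m}$. The only applicable rewrite is \eqref{7}, $u_jd_j\equiv d_{j+1}u_{j+1}$, which changes the letter's column and deposits a $d_{j+1}$ to the left of the remaining $d_j^{\,m-1}$ (and $d_{j+1}d_j$ do not commute, since \eqref{3} needs $|i-j|\geq 2$). This launches a cascade $u_j\to u_{j+1}\to\cdots\to u_{n+1}$ through the entire prefix, scrambling it with out-of-order adjacent-column down-operators and ending with a letter $u_{n+1}$ on a column that does not even occur in $[x]$. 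Cleaning up that debris (via \eqref{4}, \eqref{11}, and cancelling $u_{n+1}$ against the stray $\overline{(n+1)}$) is the real content of your inductive step, and none of it appears in the sketch; as written, the step does not go through.

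A secondary sign that the update rules were not worked out: with $\alpha$ defined over \emph{suffixes}, prepending $u_j$ can never change $\alpha_j$ (the full suffix $x'$ already gives $\alpha_j(x')\geq w_{j+1}(x')-w_j(x')$), and it increments $\alpha_{j-1}$ exactly when $\alpha_{j-1}(x')=w_j(x')-w_{j-1}(x')$, i.e.\ when the whole word attains the maximum; so your proposed trichotomy of subcases does not match the actual dichotomy, whose governing condition is global rather than ``local near column $j$.'' Appending instead gives the clean rules $\alpha_{t-1}(x)=\alpha_{t-1}(y)+1$ and $\alpha_t(x)=\max(0,\alpha_t(y)-1)$, which is precisely what drives the paper's four cases. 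Two smaller points: the base case is not ``direct from the definition''---for the empty word one must still show $u_{\overline{1}^m\cdots\overline{n}^m n^m\cdots 1^m}\equiv id$ for \emph{all} large $m,n$, which needs the commutation tricks and Lemma~\ref{newidentity}---and your length-dependent ``buffer'' in $M,N$ is unnecessary: taking $M=\max\{m(x),M'\}$ and $N=\max\{n(x),N'\}$, with $M',N'$ supplied by the inductive hypothesis, suffices. The repair is simply to run your induction from the right end of the word, which realigns it with the paper's proof.
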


\begin{proof}
 As before, we will abbreviate $[x] = [x]_{m,n}$ and $[y] = [y]_{m,n}$.  We proceed by induction on the length of $x$.  First suppose $\ell = 0$ (that is, $x$ is the empty word), and take any $m,n \geq 0$.   Then we have $[x] = \overline{1}^m \ldots \overline{n}^m n^m \ldots 1^m$ and we wish to show that $u_{[x]} \equiv id \pmod{J}$.  
By \eqref{1} and \eqref{10},
\[\s{n\cdots 1}\s{n} \equiv \s{n}\s{(n-1)}\s{n}\s{(n-2)\cdots 1} \equiv \s{nn(n-1)\cdots 1}.\]
In other words, $\s{n}$ and $\s{n\cdots 1}$ commute. Therefore
\begin{align*}
	(\s{n \cdots 1})\s{n}^{m-1} \cdots \s{1}^{m-1} &\equiv \s{n}^m (\s{(n-1) \cdots 1})\s{(n-1)}^{m-1} \cdots \s{1}^{m-1}\\ &\equiv \s{n}^m \s{(n-1)}^m (\s{(n-2) \cdots 1}) \s{(n-2)}^{m-1} \cdots \s{1}^{m-1}\\
	&\;\;\vdots\\
	&\equiv \s{n}^m \cdots \s{1}^m,
\end{align*}
so $\s{n}^m \cdots \s{1}^m \equiv (\s{n \cdots 1})^m$.
Similarly by \eqref{3} and \eqref{11}, 
$\s{\overline 1}^m \cdots \s{\overline n}^m \equiv (\s{\overline 1 \cdots \overline n})^m$.
Then applying Lemma~\ref{newidentity} repeatedly to
\[\s{\overline 1}^m \cdots \s{\overline n}^m\s{n}^m \cdots \s{1}^m \equiv (\s{\overline 1 \cdots \overline n})^m(\s{n \cdots 1})^m\]
gives the claim.

Now suppose the proposition statement is true for all words of length less than $\ell$.  Let $x = x_1 \cdots x_{\ell}$ and $y = x_1 \cdots x_{\ell-1}$.  By induction we know the statement holds for $y$ for some $N',M' \in \NN$.  Then take $M = \max \{m(x),  M'\}$ and $N = \max \{ n(x), N' \}$ and let $m \geq M$ and $n \geq N$.   By induction we have $u_x = u_y u_{x_{\ell}} \equiv u_{[y]} u_{x_{\ell}} \pmod{J}$.  From this we see that it suffices to show $u_{[y]} u_{x_{\ell}} \equiv u_{[x]} \pmod{J}$. For ease of notation we let $\alpha_i = \alpha_i(y)$, $\beta_i = \beta^m_i(y)$, $w_i = w_i(y),$ and $\beta_i(x) = \beta_i^m(x)$ for all $i$.

We now split the argument into four cases depending on $x_\ell$ and $\alpha_i$. 
Note that if $x_\ell = t$ or $\overline t$ for $t \geq 1$, then $\alpha_i(x) = \alpha_i$, $w_i(x) = w_i$, and $\beta_i(x) = \beta_i$ for all $i \neq t, t-1$. 

\begin{case}
Suppose $x_{\ell} = t$ and $\alpha_t = 0$.  We have $\alpha_{t-1}(x) = \alpha_{t-1} +1$, $w_{t-1}(x) = w_{t-1}$, $\alpha_t(x) = 0$, and $w_t(x) = w_t +1$, so that $ \beta_{t-1}(x) = \beta_{t-1} + 1$ and $\beta_t(x) = \beta_t + 1$.
Then
\begin{alignat*}{2}
	 u_{[y]} u_t &\equiv  \cdots \s t^{\beta_t}    \s ( \s t \s - \s 1 \s)^{\beta_{ t- 1}} \overline{\s ( \s t \s - \s 1 \s)}^{\alpha_{t-1}} \cdots \s t && \\
	 &\equiv  \cdots \s t^{\beta_t}   \s ( \s t \s - \s 1 \s)^{\beta_{t-1}} \s t \overline{\s ( \s t \s - \s 1 \s)}^{\alpha_{t-1}}  \cdots  && (\ref{1}), (\ref{6})\\
	 &\equiv  \cdots \s t^{\beta_t}    \s ( \s t \s - \s 1 \s)^{\beta_{t-1}} \s t \overline{ \s t} \s t \overline{\s ( \s t \s - \s 1 \s)}^{\alpha_{t-1}}  \cdots  && (\ref{8}) \\
	 &\equiv  \cdots \s t^{\beta_t}    \s ( \s t \s - \s 1 \s)^{\beta_{t-1}} \s t \s{(t-1)} \overline{\s ( \s t \s - \s 1 \s)}^{\alpha_{t-1}+1}  \cdots \qquad && (\ref{7}) \\
	 &\equiv  \cdots \s t^{\beta_t+1}    \s ( \s t \s - \s 1 \s)^{\beta_{t-1}+1}\overline{\s ( \s t \s - \s 1 \s)}^{\alpha_{t-1}+1}  \cdots  && (\ref{2}) \\
	 &= u_{[x]}.
\end{alignat*}
\end{case}

\begin{case}
Suppose that $x_{\ell} = t$ and $\alpha_t \neq 0$.  We have $\alpha_{t-1}(x) = \alpha_{t-1} +1$, $w_{t-1}(x) = w_{t-1}$, $\alpha_t(x) = \alpha_t -1$, and $w_t(x) = w_t +1$, so that $\beta_{t-1}(x) = \beta_{t-1} + 1$ and $\beta_t(x) = \beta_t$.
Then
\begin{alignat*}{2}
    u_{[y]} u_t &\equiv  \cdots \s t^{\beta_t} \overline{\s t}^{\alpha_t}    \s (\s t \s - \s 1 \s )^{\beta_{ t- 1}} \overline{\s (\s t \s - \s 1 \s )}^{\alpha_{t-1}} \cdots \s t & \\
    &\equiv  \cdots \s t^{\beta_t} \overline{\s t}^{\alpha_t-1}    \s (\s t \s - \s 1 \s )^{\beta_{t-1}} \overline{\s t} \s t \overline{\s (\s t \s - \s 1 \s )}^{\alpha_{t-1}}  \cdots  && \eqref{1}, \eqref{6}\\
    &\equiv  \cdots \s t^{\beta_t } \overline{\s t}^{\alpha_t - 1}   \s (\s t \s - \s 1 \s )^{\beta_{ t-1} + 1}     \overline{\s (\s t \s - \s 1 \s )}^{\alpha_{t-1} + 1}  \cdots \qquad && \eqref{7} \\
    &= u_{[x]}.
\end{alignat*}

\end{case}

\begin{case}
Suppose that $x_{\ell} = \overline{t}$ and $\alpha_{t-1} = 0$.  We have $\alpha_{t-1}(x) = 0$, $w_{t-1}(x) = w_{t-1}$, $\alpha_t(x) = \alpha_t + 1$, and $w_t(x) = w_t - 1$, so that $\beta_{t-1}(x) = \beta_{t-1}$ and $\beta_t(x) = \beta_t$.
Then
\begin{alignat*}{2}
    u_{[y]} d_t &\equiv  \cdots \s t^{\beta_t} \overline{\s t}^{\alpha_t}   \s (\s t \s - \s 1 \s )^{\beta_{ t- 1}} \cdots \overline{ \s t} & \\
    &\equiv  \cdots \s t^{\beta_t} \overline{\s t}^{\alpha_t + 1}   \s (\s t \s - \s 1 \s )^{\beta_{ t- 1}} \cdots  \qquad&& \eqref{3}, \eqref{6}\\
    &= u_{[x]}.
\end{alignat*}
\end{case}

\begin{case}
Finally, suppose that $x_{\ell} = \overline{t}$ and $\alpha_{t-1} \neq 0$. We have $\alpha_{t-1}(x) = \alpha_{t-1} - 1$, $w_{t-1}(x) = w_{t-1}$, $\alpha_t(x) = \alpha_t + 1$, and $w_t(x) = w_t - 1$, so that $\beta_{t-1}(x) = \beta_{t-1} - 1$ and $\beta_t(x) = \beta_t$.  Then
\begin{alignat*}{2}
    u_{[y]} d_t &\equiv  \cdots \s t^{\beta_t} \overline{\s t}^{\alpha_t}   \s (\s t \s - \s 1 \s )^{\beta_{ t- 1}} \overline{\s (\s t \s - \s 1 \s )}^{\alpha_{t-1}} \cdots \overline{ \s t} & \\
    &\equiv  \cdots \s t^{\beta_t} \overline{\s t}^{\alpha_t}   \s (\s t \s - \s 1 \s )^{\beta_{ t- 1}} \overline{\s (\s t \s - \s 1 \s )}^{\alpha_{t-1}} \overline{ \s t} \cdots  && \eqref{3}, \eqref{6}\\
    &\equiv  \cdots \s t^{\beta_t} \overline{\s t}^{\alpha_t}    \s (\s t \s - \s 1 \s )^{\beta_{ t- 1}-1} \s (\s t \s - \s 1 \s ) \overline{\s (\s t \s - \s 1 \s )} \overline{ \s t} \overline{\s (\s t \s - \s 1 \s )}^{\alpha_{t-1}-1}  \cdots  \qquad&& (\ref{4})\\
    &\equiv  \cdots \s t^{\beta_t} \overline{\s t}^{\alpha_t}   \s (\s t \s - \s 1 \s )^{\beta_{ t- 1}-1} \overline{ \s t} \s t \overline{ \s t} \overline{\s (\s t \s - \s 1 \s )}^{\alpha_{t-1}-1}  \cdots  && (\ref{7})\\
    &\equiv  \cdots \s t^{\beta_t} \overline{\s t}^{\alpha_t}   \s (\s t \s - \s 1 \s )^{\beta_{ t- 1}-1}  \overline{ \s t} \overline{\s (\s t \s - \s 1 \s )}^{\alpha_{t-1}-1}  \cdots  && (\ref{9})\\
    &\equiv  \cdots \s t^{\beta_t} \overline{\s t}^{\alpha_t+1}  \s (\s t \s - \s 1 \s )^{\beta_{ t- 1}-1}   \overline{\s (\s t \s - \s 1 \s )}^{\alpha_{t-1}-1}  \cdots  && \eqref{6} \\
    &= u_{[x]}.
\end{alignat*}
\end{case}
This completes the proof.
\end{proof}

It is now easy to complete the proof of our main theorem.

\begin{theorem}
The ideals $I$ and $J$ are equal. Equivalently, the algebra of up- and down-operators for Young's lattice (generated by the $u_i$ and $d_i$) is determined by relations (1)--(5).
\end{theorem}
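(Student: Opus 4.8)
The plan is to assemble the pieces already in place. Since Proposition~\ref{JsubsetI} gives $J \subseteq I$, the only remaining task is the reverse inclusion $I \subseteq J$. By Proposition~\ref{binomialideal} the ideal $I$ is binomial, generated by differences $u_x - u_y$ with $u_x \equiv u_y \pmod I$; thus it suffices to show that every such binomial already lies in $J$. In other words, I would reduce the theorem to the implication: if $u_x \equiv u_y \pmod I$, then $u_x \equiv u_y \pmod J$.

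To exploit the hypothesis, I would first translate $u_x \equiv u_y \pmod I$ into combinatorial data. By Corollary~\ref{whensame}, this equivalence holds precisely when $\alpha(x) = \alpha(y)$ and $w(x) = w(y)$. The crucial observation is that the representative word $[x]_{m,n}$ depends on $x$ only through $\alpha(x)$, $w(x)$, and the parameters $m, n$, since its letters are governed entirely by the exponents $\alpha_i(x)$ and $\beta^m_i(x) = \alpha_i(x) + w_i(x) + m$. Consequently, whenever $\alpha(x) = \alpha(y)$ and $w(x) = w(y)$, we obtain the literal equality of words $[x]_{m,n} = [y]_{m,n}$ for every common choice of $m, n$.

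Next I would apply Proposition~\ref{u_x=u_[x]} to $x$ and to $y$ individually, producing thresholds $M_x, N_x$ and $M_y, N_y$ beyond which $u_x \equiv u_{[x]_{m,n}} \pmod J$ and $u_y \equiv u_{[y]_{m,n}} \pmod J$. Taking $m \geq \max\{M_x, M_y\}$ and $n \geq \max\{N_x, N_y\}$ forces both equivalences to hold at once, and the word equality from the previous step then yields
$$u_x \equiv u_{[x]_{m,n}} = u_{[y]_{m,n}} \equiv u_y \pmod J,$$
so $u_x - u_y \in J$. This establishes $I \subseteq J$, and combined with Proposition~\ref{JsubsetI} gives $I = J$.

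I do not anticipate a genuine obstacle at this final stage, as all the real difficulty has been discharged into Proposition~\ref{u_x=u_[x]} and its four-case induction. The only point demanding attention is the parameter bookkeeping: one must verify that a single pair $(m, n)$ can simultaneously exceed the thresholds for both words, which is immediate upon taking maxima. It is also worth noting that the thresholds supplied by Proposition~\ref{u_x=u_[x]} already dominate the minimal values $m(x)$ and $n(x)$ needed for $[x]_{m,n}$ to be well defined, so no separate well-definedness check is required.
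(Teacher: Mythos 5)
Your proposal is correct and takes essentially the same route as the paper: reduce to binomial generators via Proposition~\ref{binomialideal}, use Corollary~\ref{whensame} to see that $[x]_{m,n}$ depends only on $\alpha(x)$ and $w(x)$ (hence $[x]_{m,n}=[y]_{m,n}$), and apply Proposition~\ref{u_x=u_[x]} with a common pair $(m,n)$ exceeding both thresholds. If anything, your write-up is slightly more careful about the parameter bookkeeping than the paper's own two-line proof, which also inadvertently swaps its citations of Propositions~\ref{JsubsetI} and~\ref{binomialideal}.
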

\begin{proof}
The inclusion $J \subseteq I$ follows from Proposition \ref{binomialideal}.  For the other direction, note that by Proposition \ref{JsubsetI} we need only prove that $u_x \equiv u_y \pmod I$ implies $u_x \equiv u_y \pmod J$ for  words $x$ and $y$.  By Proposition \ref{u_x=u_[x]}  there exist nonnegative integers $m$ and $n$ sufficiently large such that $u_x \equiv u_{[x]_{m,n}} = u_{[y]_{m,n}} \equiv u_y \pmod J$.   
\end{proof}

\section{Subalgebras}

We now turn our attention to various subalgebras generated by up- and down-operators.  We briefly discuss a subalgebra studied by the authors in \cite{LS}, and we introduce two other subalgebras of interest, giving a complete list of relations for each of them.

\subsection{Up-operators and down-operators}  We first consider the subalgebra generated by the up-operators $u_i$. 
Let $\U'$ be the subalgebra of $\U$ generated by $u_i$ for $i \in \NN$.  Furthermore, let
$I_{\U'} = I \cap \U'$ be the ideal of $\U'$ consisting of all elements of $\U'$ that annihilate $\YY$.  We call $\U' / I_{\U'}$ the \emph{subalgebra of up-operators for Young's lattice}. 
In \cite{LS}, the present authors described this as the \emph{algebra of Schur operators} and proved the following theorem. (See also Meinel \cite{Meinel:2019dgk}.)
\begin{theorem} \label{uptheorem}
The ideal $I_{\U'}$ is generated by the following relations:
\begin{alignat*}{2}
    u_{i} u_{j} &\equiv u_{j}  u_{i} \qquad&& \text{for }|i - j|  \geq 2,  \\
    u_i  u_{i +  1} u_i &\equiv  u_{i  +  1}  u_i  u_i,  \\
    u_{i  +  1 }   u_i   u_{i  +  1}  &\equiv   u_{i  +  1}    u_{i  +  1}   u_i,  \\
    u_{i  +  1}    u_{i  +  2}   u_{i  +  1}  u_i &\equiv  u_{i  +  1}   u_{i  +  2 }   u_i u_{i  +  1}.   
\end{alignat*}
\end{theorem}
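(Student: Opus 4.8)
The plan is to mirror the proof of the main theorem, working entirely inside $\U'$. Write $J_{\U'}$ for the two-sided ideal of $\U'$ generated by the local plactic relations and the degree-4 relation $u_{i+1}u_{i+2}u_{i+1}u_i \equiv u_{i+1}u_{i+2}u_i u_{i+1}$ listed in the statement. The inclusion $J_{\U'} \subseteq I_{\U'}$ is routine: for each generating relation one checks via Corollary~\ref{whensame} that the two sides have the same weight and the same $\alpha$-vector, exactly the kind of verification already carried out in Proposition~\ref{JsubsetI}. The substance is the reverse inclusion $I_{\U'} \subseteq J_{\U'}$. Grading by weight and applying Proposition~\ref{operatoraction} as in Proposition~\ref{binomialideal} shows that $I_{\U'}$ is again a binomial ideal, so it suffices to prove that any two \emph{unbarred} words $x,y$ with $\alpha(x)=\alpha(y)$ and $w(x)=w(y)$ satisfy $u_x \equiv u_y \pmod{J_{\U'}}$.

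First I would fix a canonical representative playing the role of $[x]_{m,n}$ but living inside $\U'$. For an unbarred word all weights $w_i(x)$ are nonnegative, and one has the evident inequalities $\max(0,\,w_{i+1}(x)-w_i(x)) \le \alpha_i(x) \le w_{i+1}(x)$. I would define a normal form $\langle x\rangle$, depending only on the pair $(\alpha(x),w(x))$, built column-transition by column-transition from right to left so that in transition $i$ exactly $\alpha_i(x)$ copies of the letter $i+1$ are read to the right of the block of $i$'s, with the total multiplicity of each letter $i$ equal to $w_i(x)$. Since $\langle x\rangle$ depends only on $(\alpha(x),w(x))$, any two $I$-equivalent unbarred words have the same normal form, so the problem reduces to showing $u_x \equiv u_{\langle x\rangle} \pmod{J_{\U'}}$. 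I would prove this by induction on the length of $x$, appending the final letter $x_\ell$ to $\langle x_1 \cdots x_{\ell-1}\rangle$ and renormalizing, with a case analysis on $x_\ell$ and on whether the relevant entry of the $\alpha$-vector vanishes, precisely paralleling the four cases of Proposition~\ref{u_x=u_[x]}.

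The main obstacle is this renormalization step. In Proposition~\ref{u_x=u_[x]} the corresponding moves are executed with the help of the up-down relations (\ref{8}), (\ref{9}), and (\ref{7}), every one of which introduces barred letters; inside $\U'$ these are unavailable, and the entire content of the theorem is that the single degree-4 relation, together with the Knuth relations (\ref{2}) and (\ref{10}), must do their work. Concretely, transporting a freshly appended letter $i+1$ across a block of $i$'s while adjusting the $\alpha$-profile is exactly where the degree-4 relation is forced, and the crux is to check that these unbarred relations close up every case of the insertion, something the plactic relations alone cannot accomplish. I would also need to confirm that $\langle x\rangle$ is a genuine complete invariant for unbarred words, namely that the displayed inequalities on $(\alpha,w)$ carry no hidden joint constraints and that each admissible pair is realized by a unique normal-form word, so that equal invariants force equal representatives. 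Granting this, the induction gives $u_x \equiv u_{\langle x\rangle} \equiv u_{\langle y\rangle} \equiv u_y \pmod{J_{\U'}}$, establishing $I_{\U'} = J_{\U'}$.
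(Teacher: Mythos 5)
This paper never actually proves Theorem~\ref{uptheorem}: it is imported from \cite{LS} (only the binomial-ideal step, Proposition~3.3 of \cite{LS}, is reproved here as Proposition~\ref{binomialideal}). So the only in-paper model for your argument is the proof of Theorem~\ref{maintheorem}, which is indeed what you mirror. The parts of your plan that follow that model are sound: the inclusion $J_{\U'}\subseteq I_{\U'}$ is a routine check via Corollary~\ref{whensame}, the binomial-ideal reduction goes through verbatim, and your worry about ``hidden joint constraints'' can be settled affirmatively: every pair $(\alpha,w)$ with $\max(0,w_{i+1}-w_i)\le\alpha_i\le w_{i+1}$ is realized by an unbarred word, for instance the valley-shaped word $n^{w_n-\alpha_{n-1}}\cdots 2^{w_2-\alpha_1}\,1^{w_1}\,2^{\alpha_1}3^{\alpha_2}\cdots n^{\alpha_{n-1}}$, which could serve as your $\langle x\rangle$.

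The genuine gap is the inductive step, which you explicitly defer (``the crux is to check that these unbarred relations close up every case of the insertion''). That check is not a routine adaptation of the four cases of Proposition~\ref{u_x=u_[x]}: the nontrivial cases there are resolved by relations \eqref{7}, \eqref{8}, and \eqref{9}, which create or annihilate barred letters adjacent to the block being modified, and no analogue of these moves exists inside $\U'$. Concretely, with the valley normal form above, appending a letter $t$ to $\langle y\rangle$ when $\alpha_t(y)>0$ changes the invariants to $\alpha_{t-1}(x)=\alpha_{t-1}(y)+1$, $\alpha_t(x)=\alpha_t(y)-1$, $w_t(x)=w_t(y)+1$, so in passing from $\langle y\rangle\, t$ to $\langle x\rangle$ one copy of the letter $t+1$ must migrate from the right block $(t+1)^{\alpha_t}$ across the entire word, past the $1^{w_1}$ valley, into the far-left block $(t+1)^{w_{t+1}-\alpha_t}$; nothing about this rearrangement is local. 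Proving that commutation, the Knuth relations \eqref{2} and \eqref{10}, and the single degree-4 relation suffice to carry out this global move in every case is precisely the content of the theorem --- it is where the degree-4 relation is genuinely needed (the local plactic relations alone are known not to suffice), and it is the part of \cite{LS} that takes real work. As written, your proposal sets up a correct framework but omits the entire mathematical core, so it does not constitute a proof.
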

Note that most of these relations do not appear in the list of relations for the algebra of up- and down-operators, as they are implied by the quadratic relations (1)--(5) when the down-operators are included.

It is natural to also consider the subalgebra generated by the down-operators.  Let  $\D$ be the subalgebra of $\U$ generated by $d_i$ for $i \in \NN$, and let $I_{\D} = I \cap \D$.  The \emph{subalgebra of down-operators for Young's lattice} is then $\D / I_{\D}$. Recall that with respect to the basis $\YY$, we have $u_i^T = d_i$.  Applying this transpose property to the relations in Theorem~\ref{uptheorem} gives the following characterization of $\D / I_{\D}$.

\begin{theorem}
The ideal $I_{\D}$ is generated by the following relations:
\begin{alignat*}{2}
    d_{i} d_{j} &\equiv d_{j}  d_{i} \qquad &\text{for } |i - j| \geq 2, \\
    d_{i}  d_{i+1} d_{i} &\equiv  d_{i}  d_{i}  d_{i+1}, &\\
    d_{i+1}   d_{i}   d_{i+1}  &\equiv   d_{i}    d_{i+1}   d_{i+1}, &  \\
    d_{i}    d_{i+1}   d_{i  +  2}  d_{i+1} &\equiv  d_{i  +  1}   d_{i}   d_{i+2} d_{i  +  1}.   
\end{alignat*}
\end{theorem}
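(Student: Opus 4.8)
The plan is to exploit the transpose anti-automorphism, as hinted just before the statement. I would first introduce the linear anti-isomorphism $\psi \colon \U' \to \D$ determined on generators by $\psi(u_i) = d_i$ together with the rule $\psi(AB) = \psi(B)\psi(A)$; concretely, $\psi$ reverses a word and replaces each $u_i$ by $d_i$, so that $\psi(u_{x_1} \cdots u_{x_\ell}) = d_{x_\ell} \cdots d_{x_1}$. Since both $\U'$ and $\D$ are free associative algebras on countably many generators, $\psi$ is a well-defined anti-isomorphism of algebras.

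The key observation is that $\psi$ realizes the operator transpose. Using $u_i^T = d_i$ together with the fact that transpose reverses the order of composition, for any word $x = x_1 \cdots x_\ell$ in $\NN$ we have $(u_{x_1} \cdots u_{x_\ell})^T = d_{x_\ell} \cdots d_{x_1}$, which is exactly the operator associated to $\psi(u_x)$. By linearity, the operator associated to $\psi(A)$ is the transpose of the operator associated to $A$ for every $A \in \U'$. I would then conclude that $\psi$ maps $I_{\U'}$ bijectively onto $I_{\D}$: an element $A \in \U'$ annihilates $\YY$ if and only if its associated operator is zero, which happens if and only if its transpose---the operator of $\psi(A)$---is zero, that is, if and only if $\psi(A) \in I_{\D}$. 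Hence $\psi(I_{\U'}) = I_{\D}$.

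Finally, because $\psi$ is an anti-automorphism, it carries the two-sided ideal generated by a set $S$ to the two-sided ideal generated by $\psi(S)$ (a general element $\sum_k a_k s_k b_k$ maps to $\sum_k \psi(b_k)\psi(s_k)\psi(a_k)$, and conversely, so the two-sided ideal structure is preserved with the two sides merely swapped). Applying $\psi$ to the four families of generators of $I_{\U'}$ furnished by Theorem~\ref{uptheorem} and rewriting each reversed monomial yields precisely the four families of $d$-relations in the statement; for instance $\psi(u_i u_{i+1} u_i - u_{i+1} u_i u_i) = d_i d_{i+1} d_i - d_i d_i d_{i+1}$ and $\psi(u_{i+1} u_{i+2} u_{i+1} u_i - u_{i+1} u_{i+2} u_i u_{i+1}) = d_i d_{i+1} d_{i+2} d_{i+1} - d_{i+1} d_i d_{i+2} d_{i+1}$. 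This identifies $I_{\D}$ with the ideal generated by the listed relations.

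The only real obstacle is bookkeeping around the \emph{order-reversing} nature of the transpose: one must verify both that $\psi$ is genuinely an anti-isomorphism of the free algebras (rather than an isomorphism) and that it still sends two-sided ideals to two-sided ideals and generators to generators. I expect this to be routine, and indeed the reversed images of the up-relations coincide on the nose with the claimed down-relations, so no further manipulation of the resulting binomials is needed.
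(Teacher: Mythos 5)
Your proposal is correct and follows essentially the same route as the paper: the paper likewise obtains this theorem from Theorem~\ref{uptheorem} by applying the transpose property $u_i^T = d_i$, which is precisely your order-reversing anti-isomorphism $\psi$. Your write-up simply makes explicit the routine verifications (that transposition reverses products, carries two-sided ideals to two-sided ideals, and sends the four up-relations exactly onto the four listed down-relations) that the paper leaves implicit.
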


\subsection{$u_t$ and $d_t$ for fixed $t$}
Fix some $1 < t \in \NN$.  Let $\B$ be the subalgebra of $\U$ generated by $u_t$ and $d_t$, and consider the subalgebra $\B/I_\B = \B/(I_\U \cap \B)  \subseteq \U/I_\U$. We will show that its ideal of relations $I_{\B}$ is generated by
\begin{align}
    u_t^{i+1} d_t^{i}  &\equiv  u_t^{i+1} d_t^{i+1}  u_t  \label{21} \\
    u_t^{i}  d_t^{i+1} &\equiv  d_t  u_t^{i+1}  d_t^{i+1} \label{22}
\end{align}
for all $i \in \NN$.  Let $J_{\B}$ be the ideal generated by relations (\ref{21}) and (\ref{22}), so that we wish to show $J_\B=I_\B$.

(When $t=1$, it is straightforward to verify that the only relation between $u_1$ and $d_1$ is \eqref{5}, namely $d_1u_1\equiv id$, as this relation can be used to rewrite any monomial in the form $u_1^id_1^j$, and all such monomials act independently on $\YY$.)

\subsubsection{Peaks and valleys}
One convenient way to interpret a word consisting only of the letters $t$ and $ \overline{t}$ is as a graph of diagonal steps.  More precisely, we construct a graph corresponding to a word $x$ in the following way. Starting at the origin in the plane we read $x$ from right to left.  When we encounter a $t$ we take a diagonal step up and to the left by adding $(-1,1)$, and when we encounter a $\overline{t}$ we take a diagonal step down and to the left by adding $(-1,-1)$. One must be careful since we are reading both the word and its graph from right to left. 

We call a point of the graph with maximal height a \emph{peak} and a point with minimal height a \emph{valley}. (Peaks and valleys need not be unique.) It is straightforward to see that if $(a,b)$ is a peak and $(c,d)$ is a valley, then $\alpha_{t-1}(x) = b$ and $\alpha_t(x) = -d$. Also note that if $(e,f)$ is the (leftmost) endpoint of the graph, then $w_t(x) =f$. Therefore by Corollary~\ref{whensame}, the action of $x$ on $\YY$ is determined entirely by the heights of its peaks, valleys, and endpoint.
\begin{example} \label{graphexample}
The word $x = t^2 \overline{t}^4 t^3$ has the graph shown below.
\[
\begin{tikzpicture}[scale=.5]
\draw[gray] (0, -2) -- (0, 4);
\draw[gray] (1,0) -- (-10,0);
\tikzstyle{w}=[circle, draw, fill=black, inner sep=0pt, minimum width=4pt]
\node[w] (a) at (0,0) {};
\node[w] (b) at (-1,1)  {};
\node[w] (c) at (-2,2)  {};
\node[w] (d) at (-3,3)  {};
\node[w] (e) at (-4,2)  {};
\node[w] (f) at (-5,1) {};
\node[w] (g) at (-6,0)  {};
\node[w] (h) at (-7, -1) {};
\node[w] (i) at (-8,0)  {};
\node[w] (j) at (-9,1)  {};
\draw  (a)--(b)--(c)--(d)--(e)--(f)--(g)--(h)--(i)--(j);
\end{tikzpicture}
\]
This graph has a peak at $(-3,3)$ and a valley at $(-7,-1)$. Correspondingly, $\alpha_{t-1}(x) = 3$ and $\alpha_t(x)=1$. The leftmost point of the graph is $(-9,1)$, so $w_t(x) = 1$.
\end{example}

Note that relations (\ref{21}) and (\ref{22}) are not bounded in degree since the only condition on $i$ is that it be a nonnegative integer.  This differs from the previous algebras that we examined in that the largest degree needed in those cases was $4$ (as in the subalgebra of up-operators $\U'/I_{\U'}$).  Indeed, relations of unbounded degree are required due to the following proposition. 
\begin{proposition} \label{unbounded}
The ideal $I_{\B}$ cannot be generated by elements of bounded degree.
\end{proposition}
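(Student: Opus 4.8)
The plan is to reduce the statement to a purely combinatorial claim about words in $u_t$ and $d_t$ and then exhibit an infinite family of relations that no bounded-degree generating set can produce. First, exactly as in Proposition~\ref{binomialideal}, the ideal $I_\B$ is binomial: it is spanned by differences $u_x-u_y$ with $x,y$ words in $\{t,\overline t\}$, and by Corollary~\ref{whensame} such a difference lies in $I_\B$ precisely when $x$ and $y$ have the same peak height $\alpha_{t-1}$, valley depth $\alpha_t$, and endpoint height $w_t$. A short argument as in the proof of Proposition~\ref{binomialideal} shows that any generating set of degree at most $D$ may be replaced by the binomials $u_a-u_b\in I_\B$ with $|a|,|b|\le D$. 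For such a binomial generating set, $u_x-u_y$ lies in the ideal if and only if $x$ can be transformed into $y$ by a finite sequence of \emph{elementary substitutions}, each replacing a contiguous subword $a$ by a subword $b$ with $|a|,|b|\le D$ and with the same triple $(\alpha_{t-1},\alpha_t,w_t)$. It therefore suffices to produce, for every $D$, two words with equal triple that cannot be connected by such length-$\le D$ substitutions.

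For $n\ge 1$ set $x_n=u_t^{\,n}d_t^{\,2n}u_t^{\,n}$ and $y_n=d_t^{\,n}u_t^{\,2n}d_t^{\,n}$. Reading the associated graphs from right to left shows that both have peak height $n$, valley depth $n$, and endpoint $0$, so $u_{x_n}\equiv u_{y_n}\pmod{I_\B}$ by Corollary~\ref{whensame}. The graphs differ only in the order of their extremes: $x_n$ attains its (unique) peak to the right of its valley, whereas $y_n$ attains its valley to the right. This is the feature I will show no short substitution can alter once $n$ is large. Concretely, call a word $w$ \emph{ascending} if its graph has a point of maximal height strictly to the right of some point of minimal height; then $x_n$ is ascending and $y_n$ is not.

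The key claim is that an elementary substitution $w=\alpha\beta\gamma\mapsto\alpha\beta'\gamma$, with $|\beta|,|\beta'|\le D$ and $\beta,\beta'$ of equal triple, preserves the ascending property whenever the global height range $\alpha_{t-1}(w)+\alpha_t(w)$ exceeds $D$. Indeed, since $\beta$ and $\beta'$ have the same relative peak, valley, and net, the substitution leaves the global maximum and minimum values unchanged and alters the graph only within the window occupied by $\beta$; and because a subword of length $\le D$ spans a height range at most $D$, which is less than the global range, that window cannot contain both a global-maximum point and a global-minimum point. A case analysis---according to whether the window omits all global maxima or all global minima, and accounting for the shift of $\gamma$ when $|\beta|\ne|\beta'|$---then shows that the existence of a global-maximum point lying to the right of a global-minimum point is preserved. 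Taking $n$ with $2n=\alpha_{t-1}(x_n)+\alpha_t(x_n)>D$, it follows that $x_n$ and $y_n$ cannot be connected by length-$\le D$ substitutions, so $u_{x_n}-u_{y_n}\in I_\B$ is not a consequence of the degree-$\le D$ relations. As $D$ was arbitrary, $I_\B$ admits no bounded-degree generating set.

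I expect the preservation claim for the ascending property to be the only real obstacle; the reduction to substitutions and the computation of the triples of $x_n$ and $y_n$ are routine bookkeeping. The crucial ingredient that makes the case analysis go through is the inequality $\alpha_{t-1}(w)+\alpha_t(w)>D$, which forces every length-$\le D$ window to miss at least one of the two global extremes and thereby pins down the left-to-right order of the surviving extreme relative to the modified window.
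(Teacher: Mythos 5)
Your proof is correct and follows essentially the same route as the paper: you reduce to binomial generators and elementary substitutions, use the identical invariant (your ``ascending'' property is exactly the paper's ``peak/valley pair''), and make the same key observation that a window of length at most $D$ cannot contain both a global maximum and a global minimum point once $\alpha_{t-1}+\alpha_t > D$. The only differences are cosmetic: the paper uses the witness pair $t^k$ and $t^k\overline{t}^{\,k}t^k$ rather than your symmetric pair, and note that under the right-to-left graph convention it is the prefix $\alpha$, not the suffix $\gamma$, that shifts horizontally when $|\beta|\neq|\beta'|$---a harmless slip in your bookkeeping.
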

\begin{proof}
Suppose for contradiction that the largest degree appearing among the generators of $I_{\B}$ is $h \in \NN$.  Choose an integer $ k> h$, and let $ x= t^k$ and $y = t^k \overline{t}^k t^k$. Observe that $w(x) = w(y) = (0, \ldots, 0, k, 0, \ldots)$ and $\alpha(x) = \alpha(y) = (0, \ldots, 0, k, 0, \ldots)$, and so $u_x \equiv u_y \pmod{I_{\B}}$ by Corollary \ref{whensame}.  

Note that in the graph of $x$, there is never a peak occurring to the right of a valley.  In other words, if $x = x_1 \ldots x_k$, then there do not exist $i<j$ such that $(-i, \alpha_{t-1}(x))$ and $(-j, -\alpha_{t}(x))$ appear in the graph of $x$. We will call an instance of a peak occurring to the right of a valley a \emph{peak/valley pair}. For instance, $x$ has no peak/valley pair but $y$ does, corresponding to the suffix subwords $t^k$ and $\overline{t}^k t^k$, respectively.


We now show that for words $z$ satisfying $\alpha_{t-1}(z)+\alpha_t(z) > h$, our degree boundedness assumption implies that the existence of a peak/valley pair is invariant modulo $I_{\B}$. This will then lead to an immediate contradiction when applied to $x$ and $y$. Let $u_m-u_{m'}$ be a generator of $I_\B$ of degree at most $h$. It suffices to show that if the word $z=m_1mm_2$ has a peak/valley pair, then so does $z'=m_1m'm_2$.

Since $u_m \equiv u_{m'} \pmod {I_\B}$, the graphs of $m$ and $m'$ must have their peaks, valleys, and endpoints at the same heights. Therefore $z$ has a peak or valley within $m$ if and only if $z'$ has a peak or valley within $m'$. If $z$ has a peak/valley pair with neither peak nor valley occurring within $m$, then $z'$ has a peak/valley pair at the same locations. If at most one of the peak or valley occurs within $m$, say the peak, then the valley must occur within $m_1$, so $z'$ will have a peak within $m'$ and a valley within $m_1$ and hence a peak/valley pair. (The other case is similar.) The only remaining possibility is if both the peak and valley occur within $m$ (for they might switch order in $m'$). However, since $\alpha_{t-1}(z) + \alpha_t(z) > h$, the difference in height between the peak and valley is more than $h$, so they cannot both appear within $m$, which has length at most $h$. This completes the proof. 
\end{proof}

\subsubsection{Proof of relations}
We now prove that relations \eqref{21} and \eqref{22} suffice. The proofs for the following two propositions are essentially the same as the proofs of the analogous propositions in Section \ref{secupanddown}.

\begin{proposition} \label{tbinomialideal} \label{binomialideal_updown}
	The ideal $I_{\B}$ is a binomial ideal.
\end{proposition}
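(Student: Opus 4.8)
The plan is to mimic the proof of Proposition~\ref{binomialideal} essentially verbatim, specializing the general argument to the two-generator subalgebra $\B$. The statement to prove is that $I_{\B}$ is a binomial ideal, meaning it is generated by differences $u_m - u_{m'}$ where $m,m'$ are words in the letters $t,\overline t$ alone and $u_m \equiv u_{m'} \pmod{I_{\B}}$. The key structural facts I would invoke are that $\U$ (and hence $\B$) is graded by weight, that $I_{\B} = I_{\U} \cap \B$ is homogeneous with respect to weight, and Corollary~\ref{whensame}, which tells us that two words in $\{t,\overline t\}$ act identically on $\YY$ if and only if they share the same $w$- and $\alpha$-vectors.

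First I would let $I_{\B}'$ be the two-sided ideal of $\B$ generated by all binomials $u_m - u_{m'}$ with $u_m \equiv u_{m'} \pmod{I_{\B}}$, and take an arbitrary element $R \in I_{\B}$. Since weight is preserved by the local relations and $I_{\B}$ is weight-homogeneous, I may reduce to the case where every monomial appearing in $R$ has a single fixed weight $w = (0,\ldots,0,w_t,0,\ldots)$ (note that for words in $t,\overline t$ the weight is concentrated in the $t$-th coordinate). Working modulo $I_{\B}'$, I can collapse monomials that are $I_{\B}$-equivalent and write $R \equiv R' \pmod{I_{\B}'}$ where $R' = \sum_{i=1}^n c_{m^{(i)}} u_{m^{(i)}}$ with the $u_{m^{(i)}}$ pairwise inequivalent modulo $I_{\B}$ and all $c_{m^{(i)}} \neq 0$. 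By Corollary~\ref{whensame}, inequivalence together with equal weight forces the vectors $\alpha(m^{(i)})$ to be distinct, so I may order the words so that these $\alpha$-vectors appear in lexicographic order.

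The remaining step is the separation argument showing $n = 0$, which proceeds exactly as in Proposition~\ref{binomialideal}. Assuming $n \geq 1$, I would choose $\lambda \in \YY$ with $\lambda_k' - \lambda_{k+1}' = \alpha_k(m^{(1)})$ for all $k$; by Proposition~\ref{operatoraction}, $u_{m^{(1)}}(\lambda) \neq 0$, while for each $i > 1$ the lexicographic ordering yields an index $s$ with $\alpha_s(m^{(1)}) < \alpha_s(m^{(i)})$, forcing $u_{m^{(i)}}(\lambda) = 0$. Then $0 = R'(\lambda) = c_{m^{(1)}} u_{m^{(1)}}(\lambda)$ gives $c_{m^{(1)}} = 0$, a contradiction, so $R' = 0$ and hence $R \in I_{\B}'$, yielding $I_{\B} = I_{\B}'$.

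Since the argument is a near-verbatim specialization, there is no genuine obstacle; the only point that warrants a moment of care is confirming that the constructed $\lambda$ is legitimate and that the reductions stay inside $\B$. Because all words involved use only the letters $t$ and $\overline t$, their $\alpha$-vectors are supported on coordinates $t-1$ and $t$, so the partition $\lambda$ with prescribed column-difference data certainly exists, and every binomial used in passing to $R'$ lies in $\B$ by construction. For this reason I would simply state that the proof is identical to that of Proposition~\ref{binomialideal} with $\B$ in place of $\U$, perhaps recording the one-line specialization rather than reproducing the full computation.
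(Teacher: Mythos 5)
Your proposal is correct and takes exactly the approach the paper intends: the paper itself omits the proof, remarking only that it is ``essentially the same'' as that of Proposition~\ref{binomialideal}, which is precisely the specialization to $\B$ that you carried out (including the worthwhile observation that the separating partition $\lambda$ exists because the $\alpha$-vectors of words in $t,\overline t$ are supported on coordinates $t-1$ and $t$).
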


\begin{proposition} \label{J_BinI_B}
	The inclusion of ideals $J_\B \subseteq I_\B$ holds.
\end{proposition}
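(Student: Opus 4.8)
The goal is to prove $J_\B \subseteq I_\B$, i.e. that relations \eqref{21} and \eqref{22} actually hold as operator identities on $\YY$. Since $I_\B = I_\U \cap \B$ and $I_\U$ is the annihilator ideal, by Corollary~\ref{whensame} it suffices to check that for each relation the two sides are words $x,y$ (in the alphabet $\{t,\overline t\}$) with $\alpha(x)=\alpha(y)$ and $w(x)=w(y)$. This reduces the whole proposition to a finite, purely combinatorial verification of weights and $\alpha$-vectors, which I would carry out using the peak/valley graphical interpretation introduced just above, since that makes the $\alpha$-computation for words in $t,\overline t$ essentially immediate.

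The plan is as follows. For relation \eqref{21}, set $x = u_t^{i+1}d_t^i$ and $y = u_t^{i+1}d_t^{i+1}u_t$, read as words $t^{i+1}\overline t^{\,i}$ and $t^{i+1}\overline t^{\,i+1}t$. The weight is immediate: $w_t(x)=(i+1)-i=1$ and $w_t(y)=(i+1)-(i+1)+1=1$, and all other weight coordinates vanish, so $w(x)=w(y)$. For the $\alpha$-vector, only $\alpha_{t-1}$ and $\alpha_t$ can be nonzero (since only the letters $t,\overline t$ appear), so I would read off the peak and valley heights from the graph. Reading right to left, $x=t^{i+1}\overline t^{\,i}$ starts with $i$ down-steps (valley at height $-i$\,?—careful with orientation) then $i+1$ up-steps; concretely I would just compute, over all suffix subwords $\tilde x$, the maximum of $w_t(\tilde x)$ and of $-w_t(\tilde x)$, which give $\alpha_{t-1}(x)$ and $\alpha_t(x)$ respectively. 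For $x$ the suffixes have $w_t$-values ranging over $\{0,-1,\dots,-i,-i+1,\dots,1\}$, giving $\alpha_{t-1}(x)=1$, $\alpha_t(x)=i$; for $y$ the suffixes of $t^{i+1}\overline t^{\,i+1}t$ produce the same extremal values $\max w_t = 1$ and $\min w_t = -i$, giving $\alpha_{t-1}(y)=1$, $\alpha_t(y)=i$. Hence $\alpha(x)=\alpha(y)$. Relation \eqref{22} is handled identically (indeed it is essentially the transpose of \eqref{21} under $u_t \leftrightarrow d_t$): one checks $w(x)=w(y)=(\dots,-1,\dots)$ and reads off matching peak/valley heights.

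Concretely I would phrase the verification in the language of the preceding paragraph: the action on $\YY$ is determined by the heights of the peaks, valleys, and endpoint of the graph, so it is enough to observe that the graphs of the two sides of \eqref{21} (and of \eqref{22}) have the same maximal height, the same minimal height, and the same endpoint height. A one-line computation of these three numbers for each side finishes each relation.

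The statement is routine and I do not expect a genuine obstacle; the only thing to be careful about is the right-to-left reading convention for suffix subwords and for the graph, since an orientation slip would swap the roles of $\alpha_{t-1}$ and $\alpha_t$ or of peak and valley. The cleanest write-up avoids the graph entirely and just exhibits, for each relation, the two extremal values $\max_{\tilde x} w_t(\tilde x)$ and $\min_{\tilde x} w_t(\tilde x)$ over suffix subwords together with the total weight, invoking Corollary~\ref{whensame}; since the proposition statement already flags that the argument mirrors Proposition~\ref{JsubsetI}, I would keep it similarly brief.
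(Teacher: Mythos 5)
Your proposal is correct and takes essentially the same route as the paper: the paper's proof of this proposition is declared to be ``essentially the same'' as that of Proposition~\ref{JsubsetI}, namely invoking Corollary~\ref{whensame} and checking that both sides of \eqref{21} and \eqref{22} have equal weight and $\alpha$-vector, which is exactly your verification (and your computed values $w_t=1$, $\alpha_{t-1}=1$, $\alpha_t=i$ for \eqref{21} and $w_t=-1$, $\alpha_{t-1}=0$, $\alpha_t=i+1$ for \eqref{22} are right).
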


As in Section \ref{secupanddown}, our approach is to construct a standard equivalence class representative $u_{[x]}$ (modulo $I_{\B}$) for every monomial $u_x$ and to then show that $u_x \equiv u_{[x]} \pmod{ J_{\B}}$. 

\begin{definition}
For any word $x$ in $t$ and $\overline{t}$, define
\[[x] = t^{w_t(x) + \alpha_t(x)} \overline{t}^{\alpha_{t-1}(x) + \alpha_t(x)} t^{\alpha_{t-1}(x)}.\]
We say that such a word $[x]$ is the \emph{standard representative} for $x$, or alternatively that it is in \emph{standard form}.
\end{definition}
Note that all the exponents appearing in $[x]$ are nonnegative: in particular, by the definition of $\alpha_t(x)$ we have $\alpha_t(x) \geq -w_t(x)$, and so $w_t(x) + \alpha_t(x) \geq 0$. It is straightforward to check that $w(x) = (0, \ldots,0, w_t(x), 0, \ldots) = w([x])$ and $\alpha(x) = (0, \ldots,0, \alpha_{t-1}(x), \alpha_t(x), 0, \ldots) = \alpha([x])$, so Corollary \ref{whensame} implies that $[x]$ is the unique word in standard form such that $u_x \equiv u_{[x]} \pmod {I_\B}$.

\begin{proposition} \label{tequivalencerep}
Let $x = x_1 \cdots x_{\ell}$ be a word in $t$ and $\overline{t}$.  We have $u_x \equiv u_{[x]} \pmod{J_{\B}}$.
\end{proposition}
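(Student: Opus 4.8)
The plan is to prove $u_x \equiv u_{[x]} \pmod{J_\B}$ by induction on the length $\ell$ of the word $x$, mirroring the structure of the proof of Proposition~\ref{u_x=u_[x]}. For the base case $\ell = 0$ the word $x$ is empty, so $w_t(x) = \alpha_{t-1}(x) = \alpha_t(x) = 0$ and $[x]$ is also the empty word, giving $u_x = id = u_{[x]}$ trivially. For the inductive step I would write $x = y\,x_\ell$ where $y = x_1 \cdots x_{\ell-1}$, and by the inductive hypothesis $u_x = u_y u_{x_\ell} \equiv u_{[y]} u_{x_\ell} \pmod{J_\B}$. It then suffices to show that appending the single letter $x_\ell \in \{t, \overline t\}$ to the standard form $u_{[y]}$ and simplifying modulo $J_\B$ yields exactly $u_{[x]}$.

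The heart of the argument is the case analysis on whether $x_\ell = t$ or $x_\ell = \overline t$, and on how appending that letter changes the statistics $w_t$, $\alpha_{t-1}$, $\alpha_t$. Writing $a = \alpha_{t-1}(y)$, $b = \alpha_t(y)$, and $c = w_t(y)$, the standard form is $u_{[y]} = \s t^{c+b}\,\s{\overline t}^{a+b}\,\s t^{a}$. The peak/valley graph picture from the preceding subsection makes the bookkeeping transparent: appending a letter on the right corresponds to prepending a diagonal step at the right end of the graph, and I would track how the peak height $a$, the valley depth $b$, and the endpoint height $c$ respond. The four natural cases are $(x_\ell = t,\ b = 0)$, $(x_\ell = t,\ b \neq 0)$, $(x_\ell = \overline t,\ a = 0)$, and $(x_\ell = \overline t,\ a \neq 0)$, exactly paralleling the cases in Proposition~\ref{u_x=u_[x]}. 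For instance, when $x_\ell = t$ and $b \neq 0$, we get $\alpha_{t-1}(x) = a$, $\alpha_t(x) = b-1$, $w_t(x) = c+1$, so the target exponents are $(c+b,\ a+b-1,\ a)$, and I must verify $\s t^{c+b}\,\s{\overline t}^{a+b}\,\s t^{a}\,\s t \equiv \s t^{c+b}\,\s{\overline t}^{a+b-1}\,\s t^{a+1}$ modulo $J_\B$; this is precisely a single application of relation~\eqref{21} (with the index matched to $a$) to cancel one $\s{\overline t}\,\s t$ pair across the boundary. The case $x_\ell = t$, $b = 0$ produces a new valley and should use \eqref{21} to introduce the needed $\s{\overline t}$'s, while the two $x_\ell = \overline t$ cases are the mirror images using \eqref{22}.

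The main obstacle I anticipate is the case where appending the letter changes \emph{which} of the three blocks absorbs it and forces a structural rewriting rather than a clean exponent shift, namely $x_\ell = t$ with $b = 0$ (no existing valley) and the symmetric $x_\ell = \overline t$ with $a = 0$. In the $b = 0$ case $u_{[y]} = \s t^{c}\,\s t^{a}$ has no barred block, yet appending $t$ with $\alpha_t(y) = 0$ leaves $\alpha_t(x) = 0$ as well, so one simply merges $t$ into the unbarred prefix and the identity is immediate; the genuinely delicate subcase is matching the index $i$ in relations \eqref{21}--\eqref{22} correctly so that the exponent of the surviving block agrees with $\beta$-type quantities on both sides. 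I would handle this by checking the index arithmetic explicitly against the formulas $\alpha_t(x) \geq -w_t(x)$ that guarantee nonnegativity of all exponents, ensuring that whenever \eqref{21} or \eqref{22} is invoked its index $i$ is a genuine nonnegative integer. Once each of the four cases reduces to a single application of \eqref{21} or \eqref{22} (together, if needed, with the observation that $\s t$ and $\s{\overline t}$ blocks of matching length may be freely created or destroyed via these relations), the induction closes and the proposition follows.
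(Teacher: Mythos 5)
Your induction peels off the \emph{last} letter of $x$, mirroring Proposition~\ref{u_x=u_[x]}, but the paper's proof peels off the \emph{first} letter ($u_x = u_{x_1}u_y$ with $y = x_2\cdots x_\ell$), and this difference is fatal for your argument. Because $\alpha_{t-1}$ and $\alpha_t$ are maxima over \emph{suffix} subwords, prepending a letter creates exactly one new suffix (the whole word), so the statistics change in a controlled way and the correct case split is on whether $w_t(y) = \alpha_{t-1}(y)$ (for first letter $t$) or $\alpha_t(y) = -w_t(y)$ (for first letter $\overline t$). Appending on the right instead shifts \emph{every} suffix statistic: for $x = yt$ one always has $\alpha_{t-1}(x) = \alpha_{t-1}(y)+1$, and for $x = y\overline t$ one always has $\alpha_t(x) = \alpha_t(y)+1$. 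Your case $(x_\ell = t,\ b \neq 0)$ therefore has the wrong statistics: $\alpha_{t-1}(x) = a+1$, not $a$, so in fact $[x] = t^{c+b}\,\overline t^{\,a+b}\,t^{a+1} = [y]\,t$ and nothing at all needs proving there; meanwhile the identity you propose to verify, $u_t^{c+b}d_t^{a+b}u_t^{a+1} \equiv u_t^{c+b}d_t^{a+b-1}u_t^{a+1}$ (or with $u_t^a$ on the right, matching your stated target exponents), is in general \emph{false} modulo $I_\B$ --- under the first reading the weights disagree, and under the second the values of $\alpha_{t-1}$ disagree whenever $\alpha_{t-1}(y) > w_t(y)$ --- so no sequence of applications of \eqref{21} can establish it. A smaller error of the same kind: when $b = 0$ the standard form is $[y] = t^{c}\,\overline t^{\,a}\,t^{a}$, which does have a barred block (you wrote $u_{[y]} = u_t^{c}u_t^{a}$), and that case is not ``immediate.''

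The deeper obstruction is that the genuinely hard right-append cases --- $x_\ell = t$ with $\alpha_t(y) = 0$, which requires $u_t^{c}d_t^{a}u_t^{a+1} \equiv u_t^{c+1}d_t^{a+1}u_t^{a+1}$, and $x_\ell = \overline t$ with $\alpha_{t-1}(y) \geq 1$, which requires $u_t^{c+b}d_t^{a+b}u_t^{a}d_t \equiv u_t^{c+b}d_t^{a+b}u_t^{a-1}$ --- cannot be realized by a single application of \eqref{21} or \eqref{22}, and not by any obvious short sequence either. Both relations rewrite patterns anchored at the left of a $u_t$-run ($u_t^{i+1}d_t^{i} \leftrightarrow u_t^{i+1}d_t^{i+1}u_t$ and $u_t^{i}d_t^{i+1} \leftrightarrow d_tu_t^{i+1}d_t^{i+1}$); what right-appending needs are their mirror images, such as $d_t^{i+1}u_t^{i+1}d_t \leftrightarrow d_t^{i+1}u_t^{i}$, and deriving those from \eqref{21}--\eqref{22} is essentially the content of the theorem being proved. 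This is also exactly why mimicking Proposition~\ref{u_x=u_[x]} fails here: in that proof the commutation relations \eqref{1}, \eqref{3}, \eqref{6} let the letter appended at the right end migrate leftward through $[y]$ before being absorbed, but in $\B$ no commutation relations are available --- $u_t$ and $d_t$ do not commute --- so a letter appended on the right is stuck there. The repair is simply to run the induction on the first letter: then the two trivial cases are $[x] = t[y]$ and $[x] = \overline t[y]$, and the two nontrivial cases each need exactly one application of \eqref{21} (with $i = w_t(y)+\alpha_t(y)$) or \eqref{22} (with $i = w_t(y)+\alpha_t(y)-1$, using $\alpha_{t-1}(y) \geq w_t(y)$ to ensure the pattern fits), which is the paper's proof.
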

\begin{proof}
We prove this by induction on the length of $x$. If $\ell = 0$ or if $x=\overline t$, then $[x] = x$, so there is nothing to prove. If $x=t$, then $[x] = t \overline{t} t$, and $\s{t} \equiv \s{t \overline{t} t}$ by \eqref{21} for $i=0$.

Now suppose the statement holds for all words shorter than $x$.  We have that $u_x = u_{x_1}u_y$ where $y = x_2 \cdots x_{\ell}$.  By induction, $u_x=u_{x_1}u_y \equiv u_{x_1} u_{[y]} \pmod{J_{\B}}$, so we need to show $u_{x_1}u_{[y]}\equiv u_{[x]}$.

If $x_1 = t$ and $w_t(y) < \alpha_{t-1}(y)$, then $w_t(x) = w_t(y)+1$ while $\alpha(x)=\alpha(y)$. Hence $[x] = t[y]$, so there is nothing to show. Similarly if $x_1 = \overline t$ and $\alpha_t(y) = -w_t(y)$, then \[[x] = \overline{t}^{\alpha_{t-1}(y) + \alpha_t(y) + 1} t^{\alpha_{t-1}(y)} = \overline t [y],\]
so again there is nothing to show.


Suppose $x_1 = t$ and $w_{t}(y) = \alpha_{t-1}(y)$.  Then $w_{t}(x) = w_t(y)+1$, $\alpha_{t-1}(x) =\alpha_{t-1}(y)+ 1$, and $\alpha_t(x) = \alpha_t(y)$. Here the graph of $x$ has a new peak at its leftmost point, so $t[y]$ is not in standard form. Applying \eqref{21} with $i = w_t(y) + \alpha_t(y)$ gives
\begin{align*}
	u_t u_{[y]} &= \s{t}^{w_t(y) + \alpha_t(y) + 1} \s{\overline{t}}^{w_{t}(y) + \alpha_t(y)} \s{t}^{\alpha_{t-1}(y)}\\
	&\equiv \s{t}^{w_t(y) + \alpha_t(y) + 1} \s{\overline{t}}^{w_{t}(y) + \alpha_t(y) + 1} \s{t}^{\alpha_{t-1}(y)+1} =u_{[x]}.
\end{align*}

Finally, suppose $x_1 = \overline{t}$ and $\alpha_t(y) > - w_t(y)$.  We then have $w_t(x) = w_t(y)-1$ and $\alpha(x) = \alpha(y)$. 
Again $\overline t[y]$ is not in standard form since it begins with $\overline{t}$.  Note that by definition $\alpha_{t-1}(y) \geq  w_t(y)$, so $\alpha_{t-1}(y)+\alpha_t(y) \geq w_t(y)+\alpha_t(y)$. Therefore we can apply \eqref{22} with $i=w_t(y)+\alpha_t(y)-1=w_t(x)+\alpha_t(x)$ to get
\begin{align*}
    d_t u_{[y]} &= \s{\overline{t}} \s{t}^{w_t(y) + \alpha_t(y)} \s{\overline{t}}^{\alpha_{t-1}(y) + \alpha_t(y)} \s{t}^{\alpha_{t-1}(y)} \\
    &\equiv \s{t}^{w_t(y) + \alpha_t(y)-1} \s{\overline{t}}^{\alpha_{t-1}(y) + \alpha_t(y)} \s{t}^{\alpha_{t-1}(y)} = u_{[x]}.
\end{align*}
\end{proof}

\begin{theorem}
The ideals $I_{\B}$ and $J_{\B}$ are equal.
\end{theorem}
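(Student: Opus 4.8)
The plan is to show the two inclusions $J_{\B} \subseteq I_{\B}$ and $I_{\B} \subseteq J_{\B}$ separately, mirroring the structure used for the main theorem. The first inclusion is already established in Proposition~\ref{J_BinI_B}, so the entire content of the theorem reduces to proving $I_{\B} \subseteq J_{\B}$. For this I would reduce to binomials: by Proposition~\ref{tbinomialideal} the ideal $I_{\B}$ is binomial, so it suffices to show that whenever $u_x \equiv u_y \pmod{I_{\B}}$ for words $x,y$ in $t$ and $\overline t$, we also have $u_x \equiv u_y \pmod{J_{\B}}$.

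The key device is the standard representative $[x]$. By the discussion following the definition of $[x]$, the representative $[x]$ depends only on $w_t(x)$, $\alpha_{t-1}(x)$, and $\alpha_t(x)$, and $[x]$ is the \emph{unique} word in standard form with $u_x \equiv u_{[x]} \pmod{I_{\B}}$. Consequently, if $u_x \equiv u_y \pmod{I_{\B}}$, then Corollary~\ref{whensame} forces $w(x) = w(y)$ and $\alpha(x) = \alpha(y)$, hence $[x]$ and $[y]$ are literally the same word, so $u_{[x]} = u_{[y]}$.

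With this in hand the proof is short. Proposition~\ref{tequivalencerep} supplies exactly the bridge we need, namely $u_x \equiv u_{[x]} \pmod{J_{\B}}$ for every word $x$ in $t$ and $\overline t$. Applying this to both $x$ and $y$ and using $u_{[x]} = u_{[y]}$, I would chain the equivalences
\[
u_x \equiv u_{[x]} = u_{[y]} \equiv u_y \pmod{J_{\B}},
\]
which gives $u_x \equiv u_y \pmod{J_{\B}}$ as required. This establishes $I_{\B} \subseteq J_{\B}$, and combined with Proposition~\ref{J_BinI_B} yields $I_{\B} = J_{\B}$.

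Unlike the situation in Section~\ref{secupanddown}, where the genuine difficulty lay in the multi-case inductive argument reducing an arbitrary monomial to its representative, here all of that labor has already been absorbed into Propositions~\ref{tbinomialideal} and~\ref{tequivalencerep}. So I do not expect a serious obstacle in assembling the theorem itself; the only point requiring care is the observation that equal weight and $\alpha$-vectors force the two standard representatives to coincide as words (not merely to act identically), which is immediate from the uniqueness statement already recorded after the definition of $[x]$. The substantive mathematical work—verifying that the two unbounded families of relations \eqref{21} and \eqref{22} actually reduce every monomial to standard form—was the real heart of the matter and is handled in Proposition~\ref{tequivalencerep}.
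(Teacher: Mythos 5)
Your proposal is correct and takes essentially the same approach as the paper: the paper's proof of this theorem is simply the one-line citation of Propositions~\ref{tbinomialideal}, \ref{J_BinI_B}, and~\ref{tequivalencerep}, and your argument spells out exactly how those three results assemble (reduction to binomials, equality of the standard representatives $[x]=[y]$ as words via Corollary~\ref{whensame} and the uniqueness remark, then the chain $u_x \equiv u_{[x]} = u_{[y]} \equiv u_y \pmod{J_{\B}}$). There is no gap; you have merely made explicit the bookkeeping the paper leaves implicit.
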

\begin{proof}
This follows from Propositions \ref{tbinomialideal}, \ref{J_BinI_B}, and \ref{tequivalencerep}. 
\end{proof}

\subsubsection{Up- and down-operators on finite chains}

Consider again the operators $u_t$ and $d_t$ for some fixed $t > 1$. The action of these operators on $\YY$ splits up as a direct sum of the action on chains $C$, where $C$ is a set of partitions $\lambda$ that have fixed values for $\lambda'_i$ for all $i \neq t$. The action is then determined entirely by $\rho = \lambda'_{t-1} - \lambda'_{t+1}$, the difference between the $(t-1)$st and $(t+1)$st columns. (Equivalently, $C$ is a chain with $\rho+1$ elements, and $u_t$ and $d_t$ act as up- and down-operators on this chain.)


Fix $\rho$, and let $I_{\C}$ be the two-sided ideal of $\B$ containing all elements which annihilate $C$, a chain with $\rho+1$ elements.  We characterize the algebra $\B / I_{\C}$ by showing that $I_{\C}$ is generated by the following relations:
\begin{alignat}{2}
    u_t^{i+1} d_t^{i}  &\equiv  u_t^{i+1} d_t^{i+1}u_t  \qquad&&\text{for } 0 \leq i \leq \rho -1,   \label{new1} \\
    u_t^{i}  d_t^{i+1} &\equiv  d_t  u_t^{i+1}  d_t^{i+1} &&\text{for } 0 \leq i \leq \rho - 1, \label{new2} \\
    u_t^{\rho +1} &\equiv 0, & \label{new3} \\
    d_t^{\rho+1} &\equiv 0. & \label{new4}
\end{alignat}

Let $J_{\C}$ be the ideal generated by relations (\ref{new1})--(\ref{new4}). We will show that $J_{\C} = I_{\C}$ by exploiting the close relationship between these ideals and $I_{\B}$.
%
\begin{theorem}
The ideals $I_{\C}$ and $J_{\C}$ are equal.
\end{theorem}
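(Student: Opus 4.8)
The plan is to reduce everything to the already-established equality $I_\B = J_\B$ from the previous subsection, treating the two truncation relations (\ref{new3}) and (\ref{new4}) as the only genuinely new input. The inclusion $J_\C \subseteq I_\C$ is handled exactly as in Proposition~\ref{J_BinI_B}: relations (\ref{new1}) and (\ref{new2}) are instances of (\ref{21}) and (\ref{22}), which already annihilate $\YY$ and hence $C$, while $u_t^{\rho+1}$ and $d_t^{\rho+1}$ visibly annihilate a chain with only $\rho+1$ elements. So the substance lies in proving $I_\C \subseteq J_\C$.

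First I would record the precise relationship between the two ideals. Relations (\ref{new1}) and (\ref{new2}) are exactly the cases $0\le i\le \rho-1$ of the defining relations (\ref{21}) and (\ref{22}) of $J_\B$. For $i\ge \rho$, both sides of (\ref{21}) are left-divisible by $u_t^{\rho+1}$ and both sides of (\ref{22}) are right-divisible by $d_t^{\rho+1}$, so those relations lie in the two-sided ideal $(u_t^{\rho+1},d_t^{\rho+1})$. Consequently $J_\B\subseteq J_\C$, and in fact $J_\C = J_\B + (u_t^{\rho+1},d_t^{\rho+1})$. Since $I_\B = J_\B$ by the main theorem of the subsection, this already rewrites the target as $I_\C = I_\B + (u_t^{\rho+1},d_t^{\rho+1})$.

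Next I would construct standard representatives modulo $J_\C$. Because $J_\B\subseteq J_\C$, Proposition~\ref{tequivalencerep} shows that every monomial $u_x$ is congruent mod $J_\C$ to its standard form $t^{\,w_t(x)+\alpha_t(x)}\,\overline t^{\,\alpha_{t-1}(x)+\alpha_t(x)}\,t^{\,\alpha_{t-1}(x)}$. I would then use (\ref{new4}) to discard the oversized forms: when $\alpha_{t-1}(x)+\alpha_t(x) > \rho$ the middle run contains a factor $d_t^{\rho+1}$, so the whole monomial is $\equiv 0 \pmod{J_\C}$ (and symmetrically via (\ref{new3}) if needed). Hence, modulo $J_\C$, every element of $\B$ is a linear combination of the bounded standard forms, those with $\alpha_{t-1}+\alpha_t\le\rho$. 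Reinterpreted through the peak/valley graph, the bounded form with data $(\alpha_{t-1},\alpha_t,w_t)$ acts on the $(\rho+1)$-chain $C$ by sending position $j$ to $j+w_t$ on the interval $\alpha_t \le j \le \rho-\alpha_{t-1}$ and to $0$ elsewhere, and this partial shift is nonzero precisely when $\alpha_{t-1}+\alpha_t\le\rho$.

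The main obstacle is the final step: one must show that the bounded standard forms act on $\CC[C]$ as \emph{linearly independent} operators, so that if $f\in I_\C$ reduces modulo $J_\C$ to $\sum_i c_i u_{[x_i]}$ then all $c_i=0$ and therefore $f\in J_\C$. This is the delicate point, and it is genuinely harder than the corresponding statement over all of $\YY$ (Corollary~\ref{whensame}): on a single finite chain the two budgets $\alpha_{t-1}$ and $\alpha_t$ are coupled by the constraint $\alpha_{t-1}+\alpha_t\le\rho$, and distinct standard forms can have overlapping domains and equal shifts, so one cannot simply separate them by evaluating at a partition realizing a prescribed $\alpha$-vector. I would attempt this by ordering the bounded forms by their domains and shifts as partial-shift matrices and, for each form, exhibiting a chain element on which it is nonzero while those later in the order vanish, an adaptation of the lexicographic argument of Proposition~\ref{binomialideal}; verifying that passing from $\YY$ to the finite chain forces no relations beyond the two truncations is exactly where the real work of the proof should concentrate.
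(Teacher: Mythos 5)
Your reductions are sound and coincide exactly with the paper's own proof: $J_\C = I_\B + P$ where $P = (u_t^{\rho+1}, d_t^{\rho+1})$, the inclusion $J_\C \subseteq I_\C$ is routine, and the theorem becomes equivalent to the assertion that the standard forms with $\overline{t}$-exponent at most $\rho$ act as linearly independent operators on $\CC[C]$. The gap is that you never prove this assertion --- you only describe how you ``would attempt'' it --- and in fact the gap cannot be closed, because the assertion is false. Index the chain by $e_0, \dots, e_\rho$ with $j = \lambda'_t - \lambda'_{t+1}$, so that $u_t e_j = e_{j+1}$ (with $e_\rho \mapsto 0$) and $d_t e_j = e_{j-1}$ (with $e_0 \mapsto 0$). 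Then $d_t u_t$ fixes $e_j$ for $j < \rho$ and kills $e_\rho$, while $u_t^\rho d_t^\rho$ kills $e_j$ for $j < \rho$ and fixes $e_\rho$, so
\[
\mathrm{id} - d_t u_t - u_t^{\rho} d_t^{\rho} \in I_\C .
\]
Yet the empty word, $\overline{t}t$, and $t^{\rho}\overline{t}^{\rho}$ are three \emph{distinct} standard forms, each with $\overline{t}$-exponent at most $\rho$. Since any word containing $t^{\rho+1}$ or $\overline{t}^{\rho+1}$ as a factor has $\alpha_{t-1} + \alpha_t \geq \rho + 1$, the image of $P$ in $\B/I_\B$ lies in the span of the standard forms with $\overline{t}$-exponent exceeding $\rho$; hence no nonzero linear combination of bounded standard forms can lie in $I_\B + P = J_\C$. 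The displayed trinomial therefore lies in $I_\C \setminus J_\C$ for every $\rho \geq 1$. In your peak/valley language: these are partial shifts with shift $0$ and windows $[0,\rho]$, $[0,\rho-1]$, $[\rho,\rho]$, and indicator vectors of distinct intervals need not be independent --- precisely the coupling you worried about.

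Your diagnosis of where the difficulty sits was thus exactly right, and indeed sharper than the paper's: the separation argument of Proposition~\ref{binomialideal} needs a partition whose gaps $\lambda'_{t-1}-\lambda'_t$ and $\lambda'_t - \lambda'_{t+1}$ are prescribed independently, which is impossible on a single chain where they sum to $\rho$. You should know that the paper's own proof founders at this same point: it asserts that the bounded standard forms ``act independently on $C$ as in the proof of Proposition~\ref{binomialideal}'' with no further justification, and the example above contradicts that assertion. The failure is structural rather than a matter of finding a cleverer ordering: $I_\C$ contains genuine trinomials, so it is not generated by binomials together with the monomials \eqref{new3} and \eqref{new4} at all; concretely, $\B/I_\C$ is the full matrix algebra $M_{\rho+1}(\CC)$ (the elements $u_t^{i} d_t^{\rho} u_t^{\rho} d_t^{j}$ give all matrix units), of dimension $(\rho+1)^2$, which is strictly smaller than the number $\sum_{b=0}^{\rho}(b+1)^2$ of bounded standard forms spanning $\B/J_\C$. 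So the statement as given appears to be incorrect as well as unproven by your argument; any correct presentation of $\B/I_\C$ must include additional non-binomial relations such as $d_t u_t + u_t^{\rho} d_t^{\rho} \equiv \mathrm{id}$ and $u_t d_t + d_t^{\rho} u_t^{\rho} \equiv \mathrm{id}$.
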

\begin{proof}
Recall that $I_{\B}$ is the two-sided ideal of $\B$ containing all elements which annihilate $\YY$.  Let $P$ be the two-sided ideal of $\B$ which is generated by relations (\ref{new3}) and (\ref{new4}).  It is straightforward to see that $J_\C = I_\B+P$ (since \eqref{21} and \eqref{22} for $i \geq \rho$ are implied by \eqref{new3} and \eqref{new4}), so we need to show that $I_\C=I_\B+P$.

The inclusion $I_{\B} + P \subseteq I_{\C}$ holds since both \eqref{new3} and \eqref{new4} annihilate $C$. For the reverse direction, note that by Proposition~\ref{tequivalencerep}, $\B/I_\B$ has a basis consisting of the standard representatives $u_{[x]}$. A basis element $u_{[x]}$ annihilates $C$ if and only if the power of $\s{\overline t}$ appearing in it is larger than $\rho$, which occurs if and only if it lies in $P$. The other basis elements act independently on $C$ as in the proof of Proposition~\ref{binomialideal}. It follows that $I_{\C} \subseteq I_{\B}+P$.
%
%
\end{proof}

\section{Conclusion}

While the results of this paper and \cite{LS} have answered various questions about up- and down-operators, there still remain directions to explore on this subject.  For instance, recall that the ideal of relations among the up- and down-operators is generated by relations of bounded degree (in fact, of degree $2$), while some subalgebras such as $\B/I_\B$ cannot be presented by relations of bounded degree.  It would be interesting to determine for which subalgebras this is true. In other words, can one characterize when the generating relations among a subset of operators are bounded versus unbounded in degree?

More generally, it would be interesting to explore these up- and down-operators for posets other than Young's lattice.  Let $P$ be a poset with an edge labeling from an index set $I$.  We can define up-operators $u_i$ for $i \in I$ such that, for $p \in P$, $u_i(p) = q$ if $p \lessdot q$ and the edge between $p$ and $q$ is labeled $i$, and otherwise $u_i(p) = 0$ if no such $q$ exists.  Note that for Young's lattice as considered above, the label between $\lambda$ and $\mu$ where $\lambda \lessdot \mu$ is the column $i$ in which the unique box of $\mu / \lambda$ appears.   One can consider the algebras generated by these operators (or the analogously defined $d_i$) and try to describe their relations for other posets of interest, such as Bruhat order or absolute order on a Coxeter group. (The case of weak order leads to the study of nil-Coxeter algebras \cite{FominStanley}.) It would also be interesting if it were possible to relate structural properties of these algebras to the structure of the corresponding posets in some way.


\bibliographystyle{plain} 
\bibliography{references}

\end{document}